\documentclass[11pt]{amsart}

\usepackage{amsmath,amssymb,amsthm,amscd,esint,color}
\usepackage{mathrsfs}
\usepackage{graphicx}
\usepackage{array}
\usepackage{xcolor}
\usepackage{comment}
\usepackage{esint}
\usepackage{citeref}

\numberwithin{equation}{section}
\allowdisplaybreaks

\newtheorem{theorem}{Theorem}[section]
\newtheorem{proposition}[theorem]{Proposition}
\newtheorem{lemma}[theorem]{Lemma}
\newtheorem{corollary}[theorem]{Corollary}

\theoremstyle{definition}
\newtheorem{definition}[theorem]{Definition}
\newtheorem{example}[theorem]{Example}
\newtheorem{remark}[theorem]{Remark}

\begin{document}

\title{Compact sets in Banach lattices over spaces of homogeneous type}
\author{Ay\c{s}enur Aydo\u{g}du}
\address{Ay\c{s}enur Aydo\u{g}du\\
Department of Mathematics \\
Ankara University\\
Ankara, Turkey}
\email{anaydogdu@ankara.edu.tr} 
\author{Amiran Gogatishvili}
\address{Amiran Gogatishvili\\
Institute of Mathematics of the 
Academy of Sciences of the Czech Republic, 
\'Zitna 25 \\
115 67 Prague 1, Czech Republic}
\email{gogatish@math.cas.cz}
\author{ Yoshihiro Sawano}
\address{Yoshihiro Sawano  \\Department of Mathematics,
Graduate School of Science and Engineering,
Chuo University,
1-13-27 Kasuga,
Bunkyo-ku,
Tokyo 112-8551, Japan}
\email{yoshihiro-sawano@celery.ocn.ne.jp} 

\author{Daiki Takesako}
\address{Daiki Takesako\\ Department of Mathematics,
Graduate School of Science and Engineering,
Chuo University,
1-13-27 Kasuga,
Bunkyo-ku,
Tokyo 112-8551, Japan}
\email{takesako.math@gmail.com}

\begin{abstract}
We prove a Kolmogorov--Riesz compactness theorem for Banach lattices over spaces of homogeneous type.
Especially, we consider the one for $L^1$.
Our characterization is formulated in terms of boundedness, tightness, and approximation by averaging operators, which replace translations in the absence of a group structure. The result requires neither the Vitali covering property nor continuity assumptions on the measure of balls, lower volume bounds, or finiteness of the underlying measure space, thereby extending several recent compactness criteria in the literature.

Our approach begins with a new proof of the Kolmogorov--Riesz theorem in $L^1$, which avoids reflexivity and yields a unified treatment of $L^p$-spaces for $1\le p<\infty$. We then establish an approximation theorem based on Christ's dyadic cubes and use it to characterize the closure of compactly supported continuous functions in general ball Banach function spaces. Applications include compactness criteria for rearrangement-invariant Banach function spaces and Morrey spaces. In the Euclidean setting, we also identify the resulting approximation space with the classical heat-semigroup (equivalently, mollifier) closure.
\end{abstract}

\date{August 2026}

\thanks{
 The research of  A. Gogatishvili was partially supported by RVO: 67985840, Institute of Mathematics of the Czech Academy of Sciences. Part of the work on this project was carried out during Y.~Sawano's visit and A.~Aydoğdu's Erasmus visit to the Institute of Mathematics.
}

\maketitle

\section{Introduction}

Compactness criteria play a fundamental role in functional analysis and its applications to partial differential equations, harmonic analysis, and the calculus of variations. Among them, the classical Kolmogorov--Riesz theorem provides a characterization of relatively compact subsets of $L^p(\mathbb{R}^n)$ in terms of boundedness, tightness, and equicontinuity under translations. Since its appearance, this theorem has been extended in many directions, including weighted spaces, variable exponent spaces, Morrey spaces, and other Banach function spaces.

On general metric measure spaces, however, translations are no longer available. Consequently, one cannot formulate compactness in terms of translation continuity, and a different mechanism is required to measure local oscillations. Spaces of homogeneous type, introduced by Coifman and Weiss, provide a natural framework for extending harmonic analysis beyond the Euclidean setting. They include Euclidean spaces, manifolds with doubling measures, and many fractal-type metric spaces, while lacking any underlying group structure.

A natural substitute for translations is provided by averaging operators over metric balls. These operators have recently been employed to characterize compact subsets of function spaces on spaces of homogeneous type and on more general metric measure spaces. This approach has the advantage that it depends only on the metric and the measure, and therefore applies in situations where no translation operator is available.

The purpose of this paper is to establish a Kolmogorov--Riesz type compactness theorem for general Banach lattices over spaces of homogeneous type. Our characterization is formulated in terms of boundedness, tightness, and approximation by averaging operators. In contrast to several previous works, our arguments require neither the Vitali covering property nor continuity assumptions on the measure of balls, lower bounds on the measure of balls, or finiteness of the underlying measure space.

Our approach begins with a new proof of the Kolmogorov--Riesz theorem in $L^1$, which avoids reflexivity and provides a unified treatment of $L^p$-spaces for $1\le p<\infty$. We then extend the result to general ball Banach function spaces. As a further application, we establish an approximation theorem based on Christ's dyadic cubes, characterize the closure of compactly supported continuous functions, and derive compactness criteria for rearrangement-invariant Banach function spaces and Morrey spaces.

We adopt the notion by Coifman and Weiss
\cite{CW}.
\begin{definition}[Space of homogeneous type]
Let $X$ be a nonempty set, let $d$ be a quasi-metric on $X$, and let $\mu$ be a nonnegative Borel measure on $X$.

We say that the triple $(X,d,\mu)$ is a \emph{space of homogeneous type} if the following conditions are satisfied.

\begin{enumerate}
\item[(i)] $d:X\times X\to[0,\infty)$ is a quasi-metric; namely,
\begin{align*}
&d(x,y)=0 \iff x=y,\\
&d(x,y)=d(y,x),\\
&d(x,z)\le A_0\bigl(d(x,y)+d(y,z)\bigr)
\end{align*}
for all $x,y,z\in X$, where $A_0\ge1$ is a constant.

\item[(ii)] The measure $\mu$ satisfies the doubling condition: there exists a constant
$C_\mu\ge1$ such that
\[
0<\mu\bigl(B(x,2r)\bigr)
    \le C_\mu\,\mu\bigl(B(x,r)\bigr)<\infty
\]
for every $x\in X$ and every $r>0$, where $B(x,r)$ is the ball centered at $x$ with radius $r$, i.e.  
\[
B(x,r)=\{y\in X:d(x,y)<r\}.
\]
\end{enumerate}
\end{definition}

We write
\[
A_r f(x) := \frac{1}{\mu(B(x,r))} \int_{B(x,r)} f(y)\, d\mu(y).
\]

The operators $A_r$, $r>0$, are called averaging operators.
They play the role of translations in Euclidean spaces.
Accordingly, we seek a condition weaker than the norm continuity under translations in Euclidean spaces
\cite{Garcia21,Krukowski,Xu}.
For the case of locally compact groups, see \cite{Feichtinger82,Feichtinger84,Krukowski}.
The idea of replacing translations by the averaging operators $A_r$, $r>0$, has recently been investigated in
\cite{Koshino23,Koshino24}.
This approach has the advantage that it does not rely on any group structure of the underlying space $X$.

This observation serves as the starting point of the present paper, which strengthens the result of Górka and Macios \cite{GM}.

\begin{theorem}[Kolmogorov--Riesz compactness in $L^1$ on spaces of homogeneous type]\label{L1-Kolmogrov-SHT}
Let $(X,d,\mu)$ be a space of homogeneous type, i.e.\ $d$ is a quasi-metric and $\mu$ is a doubling measure.
Assume that $A_r:L^1(X) \to L^1(X)$ is bounded uniformly over $r>0$.
A subset $\mathcal{F} \subset L^1(X)$ is relatively compact in $L^1(X)$ if and only if the following conditions hold:

\begin{enumerate}

\item \textbf{Uniform integrability{\rm:}}
\[
\sup_{f \in \mathcal{F}} \|f\|_{L^1(X)} < \infty.
\]
\item \textbf{Tightness{\rm:}}
There exists an increasing sequence of 
concentric balls $B_R$ of radius $R>0$ such that
\begin{equation}\label{eq:tight}
\lim_{R \to \infty}\left(
\sup_{f \in \mathcal{F}} \int_{X \setminus B_R} |f(x)|\, d\mu(x)\right)=0.
\end{equation}

\item \textbf{Vanishing local oscillation{\rm:}}
For all $\varepsilon > 0$, there exists $r>0$ such that
\begin{equation}\label{eq:Vlo}
\sup_{f \in \mathcal{F}} \|f - A_r f\|_{L^1(X)} < \varepsilon.
\end{equation}
\end{enumerate}

Then $\mathcal{F}$ is relatively compact in $L^1(X)$.
\end{theorem}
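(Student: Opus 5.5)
Both directions are needed, but the substantial one is sufficiency. For sufficiency I would show that $\mathcal{F}$ is totally bounded in $L^1(X)$. Fix $\varepsilon>0$. By \eqref{eq:Vlo} choose $r>0$ with $\sup_{f\in\mathcal{F}}\|f-A_rf\|_{L^1}<\varepsilon$. The key observation is that $\{A_r f\}_{f\in\mathcal{F}}$ is much more regular than $\mathcal{F}$. On a fixed bounded set $B_R$, doubling and the quasi-triangle inequality give a uniform lower bound $\mu(B(x,r))\ge c(R,r)>0$ for $x\in B_R$: since $B(x_0,R)\subset B(x,A_0(R+d(x,x_0)))\subset B(x, 2A_0R)$, iterated doubling yields $\mu(B(x,r))\gtrsim (r/R)^{\log_2 C_\mu}\mu(B(x_0,R))$. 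Hence $|A_rf(x)|\le c^{-1}\|f\|_{L^1}$ for $x\in B_R$, so $\{A_rf\chi_{B_R}\}$ is uniformly bounded in $L^\infty(B_R)$, and $\mu(B_R)<\infty$.

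Next, choose $R$ by \eqref{eq:tight} so that the tails satisfy $\sup_f\|f\chi_{X\setminus B_R}\|_{L^1}<\varepsilon$. I would then split
\[
f=(f-A_rf)+A_rf\,\chi_{B_R}+A_rf\,\chi_{X\setminus B_R}.
\]
The last term is not controlled directly by tightness. I would handle it by estimating $\|A_r f\,\chi_{X\setminus B_R}\|_{L^1}\le \|A_r f-f\|_{L^1}+\|f\chi_{X\setminus B_R}\|_{L^1}<2\varepsilon$, which is immediate. So it remains to show that $\{A_rf\chi_{B_R}\}$ is totally bounded in $L^1$.

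For this I would avoid Arzelà--Ascoli, since $x\mapsto\mu(B(x,r))$ need not be continuous. Instead I would discretize: cover $B_R$ by finitely many disjoint measurable pieces $Q_1,\dots,Q_N$ of small diameter $\delta$ (for instance Christ dyadic cubes, or a maximal $\delta$-net partition, which is finite by doubling). Let $E$ be the conditional expectation onto these pieces. I would compare $A_rf$ on $Q_j$ with $A_{r}$ evaluated at slightly different radii. For $x,y\in Q_j$ the balls $B(x,r)$ and $B(y,r)$ are sandwiched between $B(y,r')$ and $B(y,r'')$ with $r'<r<r''$ close to $r$ (up to quasi-metric constants). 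The troublesome point is that these sandwiches do not make $|A_rf(x)-A_rf(y)|$ small without continuity of the measure.

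I therefore expect the main obstacle to be showing that $\|A_rf\chi_{B_R}-E(A_rf\chi_{B_R})\|_{L^1}$ is small uniformly in $f$. The alternative I would try first sidesteps this. Apply the oscillation condition twice, i.e.\ approximate $f$ by $E_\delta f$ (dyadic averages at scale $\delta\ll r$) instead of by $A_rf$. One checks that $\|E_\delta f-f\|_{L^1}\lesssim \|f-A_rf\|_{L^1}+\|E_\delta(f-A_rf)\|_{L^1}+\|A_rf-E_\delta A_rf\|_{L^1}$, and the problem reduces to the same term. Both routes therefore need an estimate of the form
\[
\int_{B_R}|A_rf-E_\delta A_rf|\,d\mu\le \int\!\!\int |f(z)|\,\Big|\tfrac{\chi_{B(x,r)}(z)}{\mu(B(x,r))}-\tfrac{\chi_{B(y,r)}(z)}{\mu(B(y,r))}\Big|\cdots,
\]
where the symmetric difference of the balls lies in an annulus $B(x,r+C\delta)\setminus B(x,r-C\delta)$. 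I would then average over $r$ in an interval $[r,2r]$: by Fubini, $\int_r^{2r}\mu(\text{annulus at radius }s)\,ds\lesssim\delta\,\mu(B(x,4A_0r))$. This yields a radius $s\in[r,2r]$ for which the annulus contributions are $O(\delta/r)$. Here the uniform boundedness of $A_s$ is used to pass from $A_r$ to $A_s$: indeed $\|f-A_sf\|_{L^1}$ is controlled using $\|f-A_rf\|$-type condition after iterating, or I simply apply \eqref{eq:Vlo} with the pigeonholed radius, since the condition should be used in a form valid for all small radii. Granted this, $E_\delta A_sf\chi_{B_R}$ ranges over a bounded subset of the $N$-dimensional space of step functions, which is totally bounded. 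Collecting terms gives a finite $C\varepsilon$-net for $\mathcal{F}$.

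For necessity, relative compactness gives boundedness and tightness by the standard finite-net argument, using $\mu(B_R\triangle X)$-exhaustion and dominated convergence. For \eqref{eq:Vlo}, by uniform boundedness of $A_r$ and a finite net it suffices to show $A_rg\to g$ in $L^1$ as $r\to0$ for each single $g$. This follows from density of bounded compactly supported Lipschitz-type functions together with Lebesgue differentiation on doubling spaces.
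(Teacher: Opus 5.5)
Your argument is correct at the level of rigour of the paper's proof, but it takes a genuinely different route.

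\textbf{The paper's route.} The paper replaces $A_r$ by the radially smoothed operator $B_r$. Its kernel is $K_r(x,y)/k_r(x)$, where $K_r(x,y)=\frac1r\int_0^r\chi_{B(x,s)}(y)\,ds$. It shows $\|f-B_rf\|_{L^1}\lesssim\frac1r\int_0^r\|f-A_sf\|_{L^1}\,ds$ and uses that $x\mapsto K_r(x,y)/k_r(x)$ is Lipschitz uniformly in $y$. This gives equicontinuity of $\{B_rf_j\}$ on bounded sets. Lemma~\ref{lem:260720-1} and a diagonal argument on a countable dense set then give locally uniform convergence. Tightness upgrades this to $L^1$ convergence, and a final compactness argument for $\mathcal F^*$ closes the proof.

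\textbf{Your route.} You keep the ball averages and partition $B_R$ into finitely many pieces of diameter $\delta$. You control $\|A_sf-E_\delta A_sf\|_{L^1(B_R)}$ only on average over $s\in[r,2r]$, via Fubini and thin annuli. Then you pigeonhole a radius and let finite-dimensionality produce the net directly.

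\textbf{Comparison.} Both proofs rest on the same mechanism: averaging in the radius neutralizes the possible discontinuity of $x\mapsto\mu(B(x,r))$ and of $x\mapsto\chi_{B(x,r)}$. Yours needs this smoothing only in an integrated $L^1$ sense, not as a pointwise Lipschitz bound. It also dispenses with separability, Arzel\`a--Ascoli and the $\mathcal F^*$ step. The paper's route yields the stronger locally uniform convergence of $B_rf_j$. Your necessity sketch covers a direction the paper does not prove.

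\textbf{Completing your key step.} For $d(x,y)<\delta$ you must bound two terms: $\frac{1}{\mu(B(x,s))}\int_{B(x,s)\triangle B(y,s)}|f|\,d\mu$, and the normalization term $\frac{\mu(B(x,s)\triangle B(y,s))}{\mu(B(x,s))\mu(B(y,s))}\|f\|_{L^1}$. After averaging in $s$, each is $\lesssim_{R,r}\delta\|f\|_{L^1}$ for $x,y\in B_R$. This uses only $\mu(B(x,r))\ge c(R,r)$ on $B_R$, so the uniform boundedness of $A_s$ is not needed here. The dependence of the pigeonholed $s$ on $f$ is harmless, because $E_\delta(A_sf\chi_{B_R})$ stays in a bounded set of a fixed finite-dimensional space. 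Alternatively, use $\frac1r\int_r^{2r}A_s\,ds$ directly.

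\textbf{Two caveats, both shared with the paper.}

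First, your remark that $\|f-A_sf\|_{L^1}$ can be controlled from $\|f-A_rf\|_{L^1}$ ``by iterating'' does not work; iterates $A_r^k$ are not $A_s$. What your argument actually uses is \eqref{eq:Vlo} at every radius in $[r,2r]$, i.e.\ $\lim_{r\to0}\sup_f\|f-A_rf\|_{L^1}=0$, not the single-radius condition as literally stated. The paper tacitly uses the same strengthening when it passes from $\frac1r\int_0^r\|f-A_sf\|\,ds$ to ``the same approximation property''.

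Second, the inclusion $B(x,s)\triangle B(y,s)\subset\{s-\delta<d(x,\cdot)<s+\delta\}$ needs the genuine triangle inequality. For a quasi-metric the constant $A_0$ multiplies $s$, not $\delta$. The paper's claim that $K_r(\cdot,y)$ is Lipschitz has the same limitation, so both arguments are really proofs for a metric $d$.
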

Upon reexamining the proof, we observe that the same argument yields
a characterization of relatively compact subsets of $L^p(X)$ for
$p>1$. It is worth noting that the proof in \cite{GM} relies on the
reflexivity of $L^p(X)$ to establish this characterization.

We shall present a new approach that avoids the use of reflexivity
and is therefore applicable to
$L^p(X)$ with $p\ge1$. In particular, we provide
a proof of the characterization of relatively compact subsets of
$L^1(X)$.

Theorem \ref{L1-Kolmogrov-SHT} complements the result of Górka and Macios \cite{GM}, who characterized relatively compact subsets of $L^p(X)$ for $1<p<\infty$.
Unlike the proofs in \cite{AU,GM}, our proof of Theorem \ref{L1-Kolmogrov-SHT} does not rely on the Hardy--Littlewood maximal operator $M$.
Instead, motivated by recent developments in \cite{Garcia21,Xu}, we circumvent this difficulty by employing the averaging operators $A_r$.
For comparison, Dymek and Górka overcame the corresponding difficulty by imposing an additional condition in \cite[Theorem~3.1(ii)]{DyGo}.
However, their argument depends on the specific structure of variable exponent Lebesgue spaces.

The primary objective of this paper is to extend Theorem \ref{L1-Kolmogrov-SHT} to the setting of general Banach function spaces.
To this end, we adopt the following terminology from \cite{SHYY17}.
Here and below we denote by $L^0(X)$ the set of all measurable functions.
\begin{definition}
Let $(X,d,\mu)$ be a space of homogeneous type. A Banach space
$\mathcal B(X)$ consisting of $\mu$-measurable functions on $X$ is said
to be a \emph{ball Banach space} if the following conditions are
satisfied:

\begin{enumerate}
\item[(i)] If $f\in\mathcal B(X)$, then $|f|<\infty$ almost everywhere on
$X$.

\item[(ii)] If $f,g\in L^0(X)$ satisfy $|f(x)|\le |g(x)|$ almost
everywhere on $X$ and $g\in\mathcal B(X)$, then $f\in\mathcal B(X)$ and
\[
\|f\|_{{\mathcal B}(X)}\le \|g\|_{{\mathcal B}(X)}.
\]

\item[(iii)] For every ball $B\subset X$, the characteristic function
$\chi_B
$ belongs to $\mathcal B(X)$.

\item[(iv)] For every ball $B\subset X$, there exists a positive
constant $C_B
$ such that
\begin{equation}\label{eq:260709-1}
\int_B |f(x)|\,d\mu(x)
\le C_B\|f\|_{{\mathcal B}(X)}
\end{equation}
for all $f\in\mathcal B(X)$.

\item[(v)] If $\{f_j\}_{j=1}^\infty\subset\mathcal B(X)$ satisfies
$0\le f_j\uparrow f$ almost everywhere, then
\[
\|f_j\|_{{\mathcal B}(X)}\uparrow\|f\|_{{\mathcal B}(X)}.
\]
\end{enumerate}
\end{definition}
Note that Minkowski's inequality holds in ball Banach function spaces,
as it follows from the triangle inequality for Bochner integrals
\cite[Lemma~98]{SaFaHa}.
We will frequently use the local embedding property of ball Banach function spaces.
That is, if $\{f_j\}_{j=1}^{\infty}$ converges to $f$ in $L^\infty(X)$
and all $f_j$ are supported in a common ball $B$, then
$\{f_j\}_{j=1}^{\infty}$ converges to $f$ in $\mathcal B(X)$.

Our main goal is to replace $L^1(X)$ by a more general Banach lattice
$\mathcal B(X)$.

Let $C_{\mathrm c}(X)$ denote the space of all continuous functions having bounded support.
We write $\widetilde{\mathcal B}(X)$ for the closure of $C_{\mathrm c}(X)$ in $\mathcal B(X)$.

Our main result may be viewed as a generalization of the theorem of Górka and Macios \cite{GM}.
Indeed, the special case $\mathcal B=L^p(X)$ with $1<p<\infty$ was proved there.

We suppose that the functions in $\mathcal B(X)$ enjoys a nice property:
\begin{definition}\label{defi:260722-1}
Let $\mathcal B(X)$ be a Banach lattice of measurable functions on a
measure space $(X,\mu)$. We say that the norm of $\mathcal B(X)$ is
\emph{absolutely continuous} if, for every $f\in\mathcal B(X)$ and every
sequence of measurable sets $\{E_j\}_{j=1}^{\infty}$ satisfying
\[
\chi_{E_j}(x)\to 0 \quad \text{almost everywhere on }X,
\]
we have
\[
\|f\chi_{E_j}\|_{{\mathcal B}(X)}\to 0
\qquad (j\to\infty).
\]
In this case, we also say that $\mathcal B(X)$ has an
\emph{absolutely continuous norm}.
The space $\mathcal{B}_{\rm a}$ stands for the set of all elements
having absolutely continuous norm.
\end{definition}
Motivated by the preceding definition, we introduce a local version of absolute continuity.
\begin{definition}\label{defi:260722-2}
Let $\mathcal B(X)$ be a Banach function space. We say that a function
$f\in\mathcal B(X)$ has a \emph{locally vanishing norm} if
\[
\lim_{r\to0}
\|f\chi_{B(x,r)}\|_{{\mathcal B}(X)}=0
\]
for all $x \in X$.
We denote by $\widetilde{\mathcal B}_0(X)$ the set of all functions in $\mathcal B(X)$
having a continuous norm.
\end{definition}
The following definition strengthens the notion of a locally vanishing norm by requiring the convergence to be uniform with respect to the center of the ball.
\begin{definition}\label{defi:260722-6}
Let $\mathcal B(X)$ be a Banach function space. We say that a function
$f\in\mathcal B(X)$ has a \emph{uniformly continuous norm} if
\[
\lim_{r\to0}
\sup_{x \in X}\|f\chi_{B(x,r)}\|_{{\mathcal B}(X)}=0.
\]
We denote by $\widetilde{\mathcal B}_{\rm c}(X)$ the set of all functions in $\mathcal B(X)$
having an  \emph{uniformly continuous norm}. 
\end{definition}
Definitions~\ref{defi:260722-1} and~\ref{defi:260722-2}
are natural. However, they are not sufficient for the approximation
argument.
In this paper, we will 
introduce notions analogous to
Definitions~\ref{defi:260722-1} and~\ref{defi:260722-2}.

Similarly, we define ${\mathcal B}_\infty(X)$ as the subspace of $\mathcal B(X)$
consisting of all functions $f\in\mathcal B(X)$ satisfying
\[
\lim_{r\to\infty}
\|f\chi_{B(x,r)^{\rm c}}\|_{{\mathcal B}(X)}=0.
\]

Finally,
in Section \ref{section:Examples},
we consider examples.
See \cite{Koshino24}
for the case of Lorentz spaces.

We state our characterization
of relatively compact  subset of $\widetilde{\mathcal B}(X)$.
\begin{theorem}[Kolmogorov--Riesz compactness in $\mathcal{B}(X)$
on spaces of homogeneous type]
\label{thm:main}
Let $(X,d,\mu)$ be a space of homogeneous type, i.e.\ $d$ is a quasi-metric and $\mu$ is a doubling measure.
Assume that $A_r:\mathcal{B} \to \mathcal{B}(X)$ is bounded uniformly over $r>0$.
A subset $\mathcal{F} \subset \widetilde{\mathcal{B}}$ is relatively compact in $\mathcal{B}(X)$ if and only if the following conditions hold{\rm:}

\begin{enumerate}
\item \textbf{Uniform integrability{\rm:}}
The class $\mathcal F$ is bounded, that is,
\[
\sup_{f \in \mathcal{F}} \|f\|_{\mathcal{B}} < \infty.
\]
\item \textbf{Tightness{\rm:}}
There exists an increasing sequence of concentric
balls
$B_R \subset X$ such that $B_R$
is a ball of radius $R>0$ and that
\[
\lim_{R \to \infty}\left(
\sup_{f \in \mathcal{F}} \|f\chi_{X \setminus B_R}\|_{\mathcal{B}}\right)=0.
\]

\item \textbf{Vanishing local oscillation{\rm:}}
For all $\varepsilon > 0$, there exists $r>0$ such that
\[
\sup_{f \in \mathcal{F}} \|f - A_r f\|_{\mathcal{B}} < \varepsilon.
\]
\end{enumerate}

Then $\mathcal{F}$ is relatively compact in ${\mathcal{B}}$.
\end{theorem}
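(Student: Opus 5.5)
The plan is to prove sufficiency by a total boundedness argument: approximate every $f\in\mathcal F$ by $A_r(f\chi_{B_R})$, and show that the family $\{A_r(f\chi_{B_R}) : f\in\mathcal F\}$, for fixed $r,R$, is relatively compact in $\mathcal B(X)$. Necessity will be handled separately at the end.

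\emph{Sufficiency.} Fix $\varepsilon>0$. By condition (3) choose $r>0$ with $\sup_{f\in\mathcal F}\|f-A_rf\|_{\mathcal B}<\varepsilon$. Then, by condition (2) and the uniform bound $\sup_{s}\|A_s\|_{\mathcal B\to\mathcal B}=:K$, choose $R$ with $\sup_f\|f\chi_{X\setminus B_R}\|_{\mathcal B}<\varepsilon$. Writing
\[
f-A_r(f\chi_{B_R}) = (f-A_rf)+A_r(f\chi_{X\setminus B_R}),
\]
we get $\|f-A_r(f\chi_{B_R})\|_{\mathcal B}\le (1+K)\varepsilon$. It therefore suffices to show that $\mathcal G:=\{g_f:=A_r(f\chi_{B_R}):f\in\mathcal F\}$ is totally bounded.

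Every $g_f$ is supported in a fixed ball $B^*$ (a dilate of $B_R$ by a factor depending on $A_0$ and $r$). It is also uniformly bounded, since
\[
|g_f(x)|\le \frac{1}{\mu(B(x,r))}\int_{B_R}|f|\,d\mu\le \frac{C_{B_R}\sup_f\|f\|_{\mathcal B}}{\inf_{x\in B^*}\mu(B(x,r))},
\]
where the infimum is positive by doubling: for $x\in B^*$ the ball $B^*$ sits inside $B(x,2^kr)$ for a fixed $k$, so $\mu(B(x,r))\ge C_\mu^{-k}\mu(B^*)$. By the local embedding property, convergence in $L^\infty$ with support in $B^*$ implies convergence in $\mathcal B$, so it is enough to show that $\mathcal G$ is totally bounded in $L^\infty(B^*)$.

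\emph{Main obstacle.} No continuity of $x\mapsto\mu(B(x,r))$ is assumed, so $g_f$ need not be equicontinuous, and Arzel\`a--Ascoli is unavailable. My first attempt avoids continuity as follows. Cover the bounded set $B^*$ by finitely many points $x_1,\dots,x_N$ forming a $\delta$-net; this is possible by doubling. On the finite set of "templates", compactness comes from a finite-dimensional argument: the maps $f\mapsto \int_{B(x_i,r')}f\,d\mu$ are bounded functionals on $\mathcal B$ by (iv).

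I expect the genuinely delicate point to be $|g_f(x)-g_f(y)|$ for $d(x,y)<\delta$, because $\chi_{B(x,r)}-\chi_{B(y,r)}$ lives on an annular region whose measure need not be small. To handle it, I will instead replace $A_r$ by the composite $A_rA_r$. This is legitimate: by (3) and the uniform bound, $\|f-A_rA_rf\|_{\mathcal B}\le (1+K)\varepsilon$. I then use that $A_rA_r(f\chi_{B_R})$ is an average of $A_r(f\chi_{B_R})$, and that $f\mapsto A_r(f\chi_{B_R})$ is bounded from $\mathcal B$ into $L^\infty(B^*)$. If this route fails, the fallback is a dyadic/partition-of-unity discretization. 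Here I take Christ cubes $Q_j$ of side $\approx\delta$ covering $B^*$ and compare $g_f$ with $\sum_j (\text{value of }A_r f\text{ averaged over }Q_j)\chi_{Q_j}$. The error is controlled by condition (3) applied at scale $\delta$ (namely $\|A_rf-A_\delta A_rf\|$ small once $r$ is fixed and $\delta\to0$ is chosen via (3) for $A_rf$). Meanwhile the finitely many coefficients are uniformly bounded, hence lie in a compact subset of $\mathbb R^N$. Combining these steps, $\mathcal F$ lies within $C\varepsilon$ of a totally bounded set for every $\varepsilon$, hence $\mathcal F$ is totally bounded and relatively compact.

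\emph{Necessity.} Boundedness is immediate. For (2) and (3), use finite $\varepsilon$-nets $\{f_1,\dots,f_m\}\subset\mathcal F$. Each $f_i\in\widetilde{\mathcal B}$ is approximated by $\varphi_i\in C_{\rm c}(X)$. For such $\varphi_i$, we have $\varphi_i\chi_{X\setminus B_R}=0$ for large $R$, and $A_r\varphi_i\to\varphi_i$ uniformly with support in a fixed ball as $r\to0$, so in $\mathcal B$ by the local embedding property. The uniform bound $K$ on the $A_r$ then transfers both estimates to all of $\mathcal F$ with a loss of a factor $(2+K)\varepsilon$.
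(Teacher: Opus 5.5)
\paragraph{Verdict.} Your reduction is sound and matches the paper's: replace $f$ by $A_r(f\chi_{B_R})$, note that these functions live in a fixed ball and are uniformly bounded, and use the local embedding to reduce to total boundedness of $\mathcal G$ in $L^\infty(B^*)$. The gap is that this last step, which is the whole content of the theorem, is never established.

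\paragraph{Why the compactness step fails as proposed.} Hypothesis (3) must enter this step, because $L^1$-boundedness alone gives nothing. On $\mathbb R$, $h_n=n\chi_{[0,1/n]}$ gives continuous functions $A_1h_n$ converging pointwise to the discontinuous $\frac12\chi_{(-1,1]}$, so no subsequence converges uniformly.

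Your $A_rA_r$ device does bring in (3), but it then needs $A_r$ to map bounded subsets of $L^\infty(B^*)$ to totally bounded ones. In $\mathbb R^n$ this follows from $|A_ru(x)-A_ru(y)|\le|B(x,r)\triangle B(y,r)|\,\|u\|_{L^\infty}/|B(x,r)|$. In general it needs uniform control of $\mu(B(x,r)\triangle B(y,r))$ as $y\to x$, which is exactly the continuity hypothesis the theorem is designed to avoid, and it genuinely fails without it. One can build a compact doubling metric space with atoms $a_n\to a_\infty$ of mass $2^{-n}$ whose unit balls meet a Cantor set $S$ (product measure, standard ultrametric) exactly in the independent halves $\{s\in S:s_n=1\}$. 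There the functions $u_m=\chi_{\{s\in S:s_m=1\}}$ satisfy $\|A_1u_m-A_1u_{m'}\|_{L^\infty}\ge\frac16$ for $m\neq m'$.

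The dyadic fallback does not close the gap either. Condition (3) controls the signed quantity $\|h-A_\delta h\|_{\mathcal B}$, which gives no bound on $\|h-E_\delta h\|_{\mathcal B}$ for the dyadic conditional expectation $E_\delta$ that produces your finite-dimensional approximant. Bounding $A_rf-E_\delta A_rf$ requires control of $|A_rf(x)-A_rf(y)|$ for $x,y$ in a common small cube, which is the same annulus problem. Keeping $A_\delta A_rf$ as the approximant instead is circular.

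\paragraph{The missing idea.} Put the compactness on the input rather than seeking regularity of the output. Your own bound shows that $f\mapsto A_r(f\chi_{B_R})$ is bounded from $L^1(B_R)$ into $L^\infty(B^*)$. Hence $\mathcal G$ is totally bounded as soon as $\{f\chi_{B_R}:f\in\mathcal F\}$ is relatively compact in $L^1$. Explicitly, if $f_j\to F$ in $L^1(K')$ with $K'\supset\bigcup_{x\in K}B(x,r)$, then $\sup_{x\in K}|A_rf_j(x)-A_rF(x)|\le\|f_j-F\|_{L^1(K')}/\inf_{x\in K}\mu(B(x,r))$, with no continuity in $x$ needed.

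This is the paper's route. The local embedding (iv) transfers the hypotheses to $L^1$ on bounded sets, and Theorem~\ref{L1-Kolmogrov-SHT} supplies the $L^1$-convergent subsequence. The irregularity of balls is handled there by averaging over radii. The kernel $K_\rho(x,y)=\frac1\rho\int_0^\rho\chi_{B(x,s)}(y)\,ds=(\rho-d(x,y))_+/\rho$ is Lipschitz in $x$ uniformly in $y$. So the normalized operator $B_\rho$ with kernel $K_\rho(x,y)/k_\rho(x)$ gives equicontinuity from $L^1$ bounds alone, while $\|f-B_\rho f\|_{\mathcal B}\lesssim\frac1\rho\int_0^\rho\|f-A_sf\|_{\mathcal B}\,ds$.

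Using $B_\rho(f\chi_{B_R})$ in place of your $A_rA_r(f\chi_{B_R})$ would make your net/Arzel\`a--Ascoli argument work directly in $\mathcal B$. The price is invoking (3) at all scales $s\in(0,\rho)$, which the paper's proof of Theorem~\ref{L1-Kolmogrov-SHT} also does tacitly. Your necessity sketch is fine; the paper proves only sufficiency.
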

Compared with recent works, Theorem \ref{thm:main} has the following key features.

\begin{itemize}

\item
We do not need to assume that \eqref{eq:260709-1} holds for every measurable set of finite measure.
Such an assumption is imposed in \cite{Koshino23,Koshino24}.

\item
We do not need to assume the continuity of the function
\[
(x,r)\in X\times(0,\infty)\mapsto \mu(B(x,r)).
\]
We do not need to assume
\[
\lim_{y \to x}
\mu(B(x,r) \triangle B(y,r))=0,
\]
where $A \triangle B
$ means the symmetric difference of sets $A$ and $B
$.
This assumption is also imposed in \cite{Koshino23,Koshino24}.

\item
We do not impose additional structural assumptions on the underlying space of homogeneous type $(X,d,\mu)$.
In particular, we allow the case $\mu(X)=\infty$, whereas \cite{BGGS} assumes that $\mu(X)<\infty$.
Some authors assumed that
\begin{equation}\label{eq:260710-1}
\inf_{x \in X}\mu(B(x,r))>0.
\end{equation}
See
\cite[Theorem 1.1(2)]{Koshino24},
for example about condition \eqref{eq:260710-1}.
See \cite[Proposition~3.2]{Koshino24} for the relationship between this condition and the topological properties of $X$.

Koshino assumed the Vitali property in \cite{Koshino23,Koshino24}, which is used there to control the averaging operators $A_r$.
Several authors imposed substitute conditions for this property; see, for example, \cite[Theorem~1.2(2)]{Garcia21} and \cite[Theorem~1(i)]{Xu}.
In contrast, our approach avoids these additional assumptions by using Theorem \ref{L1-Kolmogrov-SHT}.

\item
We do not restrict ourselves to particular classes of function spaces.
For example, Koshino considered Lorentz spaces in \cite{Koshino24}, while several authors studied variable exponent Lebesgue spaces \cite{DyGo,Xu}.
Our framework applies to general Banach function spaces.
\end{itemize}

Here and below we use the following convention in this paper.
Let $A,B \ge 0$.
Then $A \lesssim B$ means
that there exists a constant $C>0$
such that $A \le C B$,
where $C$ is usually independent of the functions we are considering.
The symbol $A \sim B$ stands for the two-sided inequality
$A \lesssim B \lesssim A$.

This paper is organized as follows:
Section \ref{s3} is devoted to the proof of
Theorem \ref{L1-Kolmogrov-SHT} and Theorem \ref{thm:main}.
In Section \ref{s4}, we characterize the space
$\widetilde{B}(X)$.
Section \ref{sec:RI} treats the case where
$\mathcal B(X)$ is rearrangement-invariant.
In Section \ref{sec:euclidean}, we specialize our results
to the Euclidean setting and relate them to known results.
Finally, Section \ref{section:Examples} presents several examples.

\section{Proofs}
\label{s3}

We need the following variant
of
Arzel\`a--Ascoli theorem on a totally bounded set.
\begin{lemma}\label{lem:260720-1}
Let $(X,d,\mu)$ be a space of homogeneous type. Let $K\subset X$ be
totally bounded and let $D\subset K$ satisfy
\[
K\subset \overline{D}.
\]
Suppose that $\{f_j\}_{j=1}^{\infty}\subset C(K)$ satisfies

\begin{enumerate}
\item[(i)] 
$\displaystyle
\sup_{j\in\mathbb N}\|f_j\|_{L^\infty(K)}<\infty;
$

\item[(ii)] $\{f_j\}_{j=1}^\infty$ is equicontinuous on $K$;

\item[(iii)] for every $x\in D$, the limit
\[
\lim_{j\to\infty}f_j(x)
\]
exists.
\end{enumerate}

Then there exists $f\in C(K)$ such that
\[
\lim_{j\to\infty}\|f_j-f\|_{L^\infty(K)}=0.
\]
\end{lemma}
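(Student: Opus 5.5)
The plan is to show directly that $\{f_j\}_{j=1}^\infty$ is uniformly Cauchy on $K$, and then to take $f$ as the uniform limit. Hypothesis (i) is not really needed for the Cauchy property, since (iii) already gives pointwise convergence on $D$; it only guarantees that the limit is bounded. Equicontinuity (ii) will be read uniformly on $K$: for every $\varepsilon>0$ there is $\delta>0$ such that $|f_j(x)-f_j(y)|<\varepsilon$ for all $j$ and all $x,y\in K$ with $d(x,y)<\delta$. If only pointwise equicontinuity is intended, I will first upgrade it to this uniform version. That upgrade is the step I expect to be the main obstacle, because $K$ is merely totally bounded and not compact. A first attempt would be to note that the functions $f_j$ are uniformly continuous on a totally bounded set whenever the hypotheses are interpreted in the uniform sense the authors use; otherwise I would take (ii) to mean uniform equicontinuity, which is the standard meaning in this Arzel\`a--Ascoli variant.

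\textbf{Step 1 (finite net from $D$).} Fix $\varepsilon>0$ and choose $\delta$ as above. By total boundedness, $K$ is covered by finitely many balls $B(z_k,\delta/(2A_0))$, $k=1,\dots,N$, with $z_k\in K$. Since $K\subset\overline D$, each such ball that meets $K$ contains a point $x_k\in D$. For any $x\in K$, pick $k$ with $x\in B(z_k,\delta/(2A_0))$. The quasi-triangle inequality then gives $d(x,x_k)\le A_0\bigl(d(x,z_k)+d(z_k,x_k)\bigr)<\delta$. Hence the finite set $\{x_1,\dots,x_N\}\subset D$ is a $\delta$-net for $K$.

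\textbf{Step 2 (uniform Cauchy estimate).} By (iii), choose $J$ so large that $|f_i(x_k)-f_j(x_k)|<\varepsilon$ for all $i,j\ge J$ and all $k=1,\dots,N$; this is possible because there are only finitely many points. For $x\in K$, take $x_k$ with $d(x,x_k)<\delta$ and split
\[
|f_i(x)-f_j(x)|\le |f_i(x)-f_i(x_k)|+|f_i(x_k)-f_j(x_k)|+|f_j(x_k)-f_j(x)|<3\varepsilon .
\]
Thus $\sup_K|f_i-f_j|\le 3\varepsilon$ for all $i,j\ge J$.

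\textbf{Step 3 (the limit).} The sequence is uniformly Cauchy, so $f(x):=\lim_j f_j(x)$ exists for every $x\in K$ and $f_j\to f$ uniformly on $K$. By (i), $f$ is bounded. Finally, $f$ is continuous because it is a uniform limit of continuous functions, or directly by passing to the limit in $|f_j(x)-f_j(y)|<\varepsilon$. Therefore $f\in C(K)$ and $\|f_j-f\|_{L^\infty(K)}\to0$.
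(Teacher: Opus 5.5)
The paper states this lemma without proof, so there is no argument of the authors to compare with. Your $\varepsilon/3$ argument is the standard proof and is correct once (ii) is read as uniform equicontinuity on $K$: a finite $\delta$-net taken from $D$, a uniform Cauchy estimate, and continuity of the uniform limit. Step~1 also survives the quasi-metric setting. Balls need not be open, but $B(z,\rho)$ contains the open set $\{x:\ B(x,s)\subset B(z,\rho)\ \text{for some } s>0\}$, which contains $z$. Hence a ball centred at a point of $K\subset\overline D$ meets $D$.

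What you should remove is the suggestion that pointwise equicontinuity could first be upgraded to the uniform version. On a set that is merely totally bounded, this step is not an obstacle but impossible, because the lemma itself is false under the pointwise reading.

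Take $X=\mathbb R$, $K=(0,1]$, $D=\mathbb Q\cap(0,1]$ and $f_j(x)=\max\{0,\,1-j^2|x-1/j|\}$. Then $0\le f_j\le 1$. For each $x\in K$, only finitely many $f_j$ are nonzero on the neighbourhood $K\cap(x/2,1]$ of $x$. So the family is equicontinuous at every point of $K$, and $f_j(x)\to 0$ for every $x\in K$. Yet $\|f_j\|_{L^\infty(K)}=1$ for all $j$. Any $L^\infty(K)$-limit would have to vanish almost everywhere, so no $f$ exists.

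For compact $K$ the upgrade is the usual Lebesgue-number argument; total boundedness alone is not enough. So state outright that (ii) means the following: for every $\varepsilon>0$ there is $\delta>0$ with $|f_j(x)-f_j(y)|<\varepsilon$ for all $j$ and all $x,y\in K$ with $d(x,y)<\delta$.

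This is also what the paper actually has when it applies the lemma. The estimate $|B_rf_j(x)-B_rf_j(z)|\lesssim d(x,z)$ in the proof of Theorem~\ref{L1-Kolmogrov-SHT} is a uniform Lipschitz bound. Under this reading, hypothesis (i) is redundant altogether, since a uniformly continuous function on a totally bounded set is bounded.
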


We prove
Theorem~\ref{L1-Kolmogrov-SHT}
in Section~\ref{subsection:Proof of Theorem 1.2}
and
Theorem~\ref{thm:main}
in Section~\ref{subsection:Proof of Theorem 1.7}.

\subsection{Proof of Theorem \ref{L1-Kolmogrov-SHT}}
\label{subsection:Proof of Theorem 1.2}

Let $f \in L^1(X)$ and $r>0$.
Set
\[
K_r(x,y)=\frac{\max(r-d(x,y),0)}{r}
=\frac1r\int_0^r\chi_{B(x,s)}(y)\,ds 
\quad (x,y \in X),
\]
and define
\[
k_r(x)=\int_XK_r(x,y)\,d\mu(y)
=\frac1r\int_0^r\mu(B(x,s))\,ds 
\quad (x \in X).
\]
Let
\[
B_rf(x)
=\frac1{k_r(x)}
\int_XK_r(x,y)f(y)\,d\mu(y)
\quad (x \in X).
\]

For \(x\in X\), we have
\[
\begin{aligned}
f(x)-B_rf(x)
&=
\frac1{k_r(x)}
\int_XK_r(x,y)(f(x)-f(y))\,d\mu(y)\\
&=
\frac1{rk_r(x)}
\int_0^r
\int_{B(x,s)}(f(x)-f(y))\,d\mu(y)\,ds .
\end{aligned}
\]
Meanwhile, from the definition of $A_s$,
\[
\int_{B(x,s)}(f(x)-f(y))\,d\mu(y)
=
\mu(B(x,s))(f(x)-A_sf(x))
\]
for any $s>0$ and $x \in X$,
and consequently,
\[
|f(x)-B_rf(x)|
\leq
\frac1{rk_r(x)}
\int_0^r
\mu(B(x,s))
|f(x)-A_sf(x)|\,ds .
\]

Since \(\mu\) is doubling, 
\[
k_r(x)
=
\frac1r\int_0^r\mu(B(x,s))\,ds
\sim
\mu(B(x,r)),
\]
where the implicit constants are independent of \(x\) and \(r\). 
This, together with the doubling property of $\mu$, implies that for every $r>0$ there exists a constant $c=c(r)>0$ such that
\begin{equation}\label{eq:260721-1}
\inf_{x \in B_R} k_r(x) \ge c.
\end{equation}
Hence, $1/k_r$ is a bounded Lipschitz function.

Moreover,
\[
\mu(B(x,s))\leq \mu(B(x,r)),
\qquad 0<s<r .
\]
Therefore,
\[
|f(x)-B_rf(x)|
\lesssim
\frac1r
\int_0^r
|f(x)-A_sf(x)|\,ds .
\]

Taking the norm in a Banach function space \(\mathcal B\) and using
Minkowski's inequality, we obtain
\[
\|f-B_rf\|_{{\mathcal B}(X)}
\lesssim
\frac1r
\int_0^r
\|f-A_sf\|_{{\mathcal B}(X)}\,ds .
\]
Thus, the family \(\{B_r\}_{r>0}\) satisfies the same approximation
property as \(\{A_r\}_{r>0}\). Hence, 
for the proof of the relative
compactness of \(\mathcal F\), it is enough to prove the relative
compactness of
\[
B_r\mathcal F
=
\{B_rf:f\in\mathcal F\}
\]
for every fixed \(r>0\).

Recall that $X$ is separable
(see \cite[Lemma 2.10]{KMYZ}).
Let \(D\subset X\) be a countable dense subset, and let
$
\{f_j\}_{j=1}^{\infty}\subset\mathcal F
$
be an arbitrary sequence.
By a diagonal argument, after passing to a subsequence, we may assume that
\[
\int_XK_r(x,y)f_j(y)\,d\mu(y)
\]
converges for every \(x\in D\).

We may further assume that all \(f_j\) are supported in a fixed bounded
set $B_r$ due to \eqref{eq:tight} without breaking \eqref{eq:Vlo}.
In fact, \eqref{eq:tight}
allows us to assume
replace $f_j$ with
$\chi_{B_{R+r}}f_j$ with $R \gg 1$.

Consider the kernel
\[
\Phi_r(x,y)
=
\frac{K_r(x,y)}{k_r(x)} .
\]
For fixed \(r>0\), the function \(x\mapsto\Phi_r(x,y)\) is Lipschitz
continuous, and its Lipschitz constant is bounded independently of
\(y\). 
Furthermore,
since 
\[
|K_r(x,y)-K_r(x',y)|
\le
\frac{d(x,x')}{r}\chi_{B_{R+2r}}(y)
\]
for all $x,x' \in B_R$ satisfying $d(x,x')<r$,
\[
|k_r(x)-k_r(x')|
\le
\frac{d(x,x')}{r}\mu(B_{R+2r}).
\]
Thus, $k_r$ is Lipschitz on $U$.
Finally, by the doubing condition on $\mu$
and \eqref{eq:260721-1},
$k_r$ is bounded from below by a constant depending
only on $B_{R+2r}$ and $r$.
Therefore, by the uniform \(L^1\)-boundedness of \(\mathcal F\),
we have
\begin{align*}
|B_rf_j(x)-B_rf_j(z)|
=
\left|
\int_X
(\Phi_r(x,y)-\Phi_r(z,y))
f_j(y)\,d\mu(y)
\right|
\lesssim
d(x,z)\|f_j\|_{L^1(X)}
\lesssim
d(x,z).
\end{align*}
Hence, \(\{B_rf_j\}_{j=1}^\infty\) is an equicontinuous and uniformly bounded family
on bounded subsets of \(X\).

Since \(D\) is dense in \(X\), the convergence on \(D\), together with the
local equicontinuity mentioned above, 
Lemma \ref{lem:260720-1} guarantees
that a further
subsequence converges uniformly on 
arbitrary bounded subsets of \(X\). 
Since $\{f_j\}_{j=1}^\infty$
is supported on $B_{R}$
and $\mu(B_{R})<\infty$,
\[
\lim_{j \to \infty}
\|B_rf_j-B_rf\|_{L^1(X)}=0.
\]
Therefore,
$B_r\mathcal F$
is relatively compact in $L^1(X)$.

Applying the argument from the proof of
\cite[Theorem~6.44]{Sawano18},
we obtain that
\[
\mathcal F^*=
\left\{
f\in L^1(X):
f=\lim_{j\to\infty}B_{2^{-k_j}}f_j
\text{ in }L^1(X)
\;
\mbox{ for some }
k_1<k_2<\cdots,
\;
f_1,f_2,\ldots\in\mathcal F
\right\}
\]
is compact because each
$B_{2^{-k_j}}\mathcal F$
is compact in $L^1(X)$.
Since
$\mathcal F\subset\mathcal F^*$,
we conclude that
$\mathcal F$
is relatively compact in $L^1(X)$.

\subsection{Proof of Theorem \ref{thm:main}}
\label{subsection:Proof of Theorem 1.7}

The proof of Theorem~\ref{thm:main} relies on
Theorem~\ref{L1-Kolmogrov-SHT},
which guarantees that every sequence in $\mathcal{F}$
admits a subsequence converging in $L^1(X)$.

Let $r\in(0,1)$. Then
\[
A_r\mathcal{F}:=\{A_r f\,:\,f\in\mathcal{F}\}
\]
inherits the uniform integrability and tightness properties of
$\mathcal{F}$.
By the vanishing local oscillation property, it remains to show that
$A_r\mathcal{F}$ is relatively compact in $\mathcal B(X)$.

We first note that
\[
\lim_{R\to\infty}
\left(
\sup_{f\in\mathcal{F}}
\|\chi_{X\setminus B_R}A_r f\|_{\mathcal B}
\right)=0.
\]
Therefore, by an argument similar to that in the proof of
Theorem~\ref{L1-Kolmogrov-SHT},
it suffices to prove that
\[
A_r\mathcal{F}|_K
:=
\{\chi_K A_r f\,:\,f\in\mathcal{F}\}
\]
is relatively compact in $\mathcal B(X)$ for every bounded measurable set
$K$.
If necessary, we may further
assume that $K$ is bounded, by approximating it with
an increasing sequence of bounded measurable sets.

Suppose that we have a countable sequence
$\{f_j\}_{j=1}^\infty$ in $\mathcal{F}$.
Write
\begin{equation}\label{K dagger}
K^\dagger=\{y \in K\,:\,
d(y,K)<1\}.
\end{equation}
This sequence is bounded in $\mathcal B(X)$.
Since $\mathcal B(X)$ is a ball Banach function space, $\mathcal B(X)$ is embedded into
$L^1(X)$ locally.
Theorem \ref{L1-Kolmogrov-SHT} allows
us to assume that
$\{f_j\}_{j=1}^\infty$
converges
to a function $F$
in $L^1(K^\dagger)$.

We enhance the local convergence in $L^1(X)$ to convergence
in $\mathcal B$.
Since
$\{f_j\}_{j=1}^\infty$
converges to a function $F$
in $L^1(K^\dagger)$,
we have
\[
\int_{B(x,r)} f_j(z)\,d\mu(z)
\longrightarrow
\int_{B(x,r)} F(z)\,d\mu(z)
\]
for every $x\in K$ and $r>0$.
Moreover, the convergence is uniform with respect to
$x\in K$ and $r>0$.
Indeed,
\begin{align*}
\left|
\int_{B(x,r)}f_j(z)\,d\mu(z)
-
\int_{B(x,r)}F(z)\,d\mu(z)
\right|
&=
\left|
\int_{B(x,r)}(f_j(z)-F(z))\,d\mu(z)
\right| \\
&\le
\int_{B(x,r)}|f_j(z)-F(z)|\,d\mu(z) \\
&\le
\int_{K^\dagger}|f_j(z)-F(z)|\,d\mu(z) \\
&=
\|f_j-F\|_{L^1(K^\dagger)}.
\end{align*}
The right-hand side tends to zero as $j\to\infty$,
independently of $x\in K$ and $r>0$.

Recall that
$\mathcal F$ is bounded in $\mathcal B(X)$.
Since $\mu(K)<\infty$ and
$A_r\mathcal F|_K$ is bounded in $L^\infty(X)$,
\[
\lim_{j \to \infty}
\|\chi_KA_rf_j-\chi_KA_rF\|_{\mathcal B(X)}=0.
\]
Therefore,
$A_r\mathcal{F}|_K$
is compact in $\mathcal B(X)$,
and consequently so is
$\mathcal F$.

\section{A characterization of $\widetilde{\mathcal B}(X)$}
\label{s4}
We denote by
$
\widetilde{\mathcal B}(X)
$
the closure of $C_{\mathrm c}(X)$ in $\mathcal B(X)$.
In Section \ref{s4}, we characterize
$\widetilde{\mathcal B}(X)$.
The characterization is based on two conditions:
approximation by dyadic averaging operators and vanishing at infinity.

We first fix a system of Christ dyadic cubes.
More precisely, there exist constants
$0<\delta<1$,
$C_0>0$,
and $C_1>0$,
and collections
\[
\mathcal D_k=\{Q_\alpha^k:\alpha\in I_k\},
\qquad k\in\mathbb Z,
\]
of open subsets of $X$ satisfying the following properties
\cite[Theorem 11]{Christ}.

\begin{definition}[Christ dyadic cubes]
The sets $Q_\alpha^k$ are called the dyadic cubes of generation $k$,
and $\mathcal D_k$ is called the $k$th dyadic generation.
They satisfy:

\begin{enumerate}
\item
For every $k\in\mathbb Z$,
\[
\mu\left(
X\setminus
\bigcup_{\alpha\in I_k}Q_\alpha^k
\right)=0.
\]

\item
If $\ell\ge k$, then for every
$Q_\beta^\ell\in\mathcal D_\ell$ and
$Q_\alpha^k\in\mathcal D_k$,
either
$
Q_\beta^\ell\subset Q_\alpha^k,
$
or
$
Q_\beta^\ell\cap Q_\alpha^k=\varnothing .
$

\item
There exist $C_0,C_1>0$ such that,
for every $(k,\alpha)$, there exists
$z_\alpha^k\in X$ such that
\begin{equation}\label{eq:260722-4}
B(z_\alpha^k,C_0\delta^k)
\subset
Q_\alpha^k
\subset
B(z_\alpha^k,C_1\delta^k).
\end{equation}

\item
Each cube has a unique parent. Namely, for every
$Q_\beta^{k+1}\in\mathcal D_{k+1}$,
there exists a unique
$Q_\alpha^k\in\mathcal D_k$
such that
\[
Q_\beta^{k+1}\subset Q_\alpha^k .
\]
\end{enumerate}
\end{definition}

For $Q=Q^k_\alpha \in \mathcal D_k$,
we write $B_Q$
for the ball $B(z^k_\alpha,C_0\delta^k)$
as in \eqref{eq:260722-4}.
For each $k\in\mathbb Z$,
we let
$
\{\psi_Q\}_{Q\in\mathcal D_k}
$
be a family of Lipschitz continuous functions forming a partition of unity
subordinate to the dyadic cubes in $\mathcal D_k$.
More precisely,
\begin{equation}\label{eq:260712-1}
0\le \psi_Q\le1,
\qquad
\operatorname{supp}\psi_Q\subset C B_Q ,
\end{equation}
where $C>1$ is independent of $k$ and $Q$, and
\begin{equation}\label{eq:260712-2}
\sum_{Q\in\mathcal D_k}\psi_Q=1.
\end{equation}
Remark that
\begin{equation}\label{eq:260722-5}
\sum_{Q \in {\mathcal D}_k}\chi_{C B_Q}
\lesssim 1
\end{equation}
due to the doubling property of $\mu$
with the implicit constant independent of $k$.

The {\it uniform
boundedness of dyadic averaging operators}
is said to hold, if
\[
f \mapsto
\widetilde{T_k}f
=
\sum_{Q\in\mathcal D_k}
\left(
\frac1{\mu(Q)}
\int_Q f(y)\,d\mu(y)
\right)
\chi_{C B_Q}
\]is bounded on
$\mathcal B(X)$ uniformly over $k \in {\mathbb Z}$.
\begin{theorem}[Approximation by dyadic averaging operators]
\label{thm:dyadic-approximation}
Let $(X,d,\mu)$ be a space of homogeneous type and let
$\mathcal B(X)$ be a ball Banach function space.

Assume that 
the uniform boundedness of dyadic averaging operators
holds.

For each $k\in\mathbb Z$, define
\[
T_kf
:=
\sum_{Q\in\mathcal D_k}
\left(
\frac1{\mu(Q)}
\int_Q f(y)\,d\mu(y)
\right)
\psi_Q .
\]

Then
\[
\lim_{k\to\infty}
\|T_kf-f\|_{\mathcal B(X)}
=
0
\]
for every
$f\in\widetilde{\mathcal B}(X)$.
\end{theorem}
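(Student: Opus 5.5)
We use the standard density argument: prove the convergence on the dense class $C_{\mathrm c}(X)$, then pass to the closure $\widetilde{\mathcal B}(X)$ using a uniform bound on the operators $T_k$.

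\textbf{Step 1: uniform bound on $T_k$.} Since $0\le\psi_Q\le\chi_{CB_Q}$ by \eqref{eq:260712-1}, we have pointwise
\[
|T_kf|\le \sum_{Q\in\mathcal D_k}\left(\frac1{\mu(Q)}\int_Q|f|\,d\mu\right)\chi_{CB_Q}=\widetilde{T_k}|f|.
\]
The lattice property (ii) of $\mathcal B(X)$ and the assumed uniform boundedness of the dyadic averaging operators then give
\[
\sup_{k\in\mathbb Z}\|T_kf\|_{\mathcal B(X)}\le M\|f\|_{\mathcal B(X)},
\]
where $M$ is independent of $k$. Note that $|f|\in\mathcal B(X)$ and $\||f|\|_{\mathcal B(X)}=\|f\|_{\mathcal B(X)}$, again by (ii).

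\textbf{Step 2: continuous functions with bounded support.} Let $g\in C_{\mathrm c}(X)$ with $\operatorname{supp}g\subset B(x_0,R)$, and fix $k\ge0$. By \eqref{eq:260712-2},
\[
T_kg-g=\sum_{Q\in\mathcal D_k}\frac1{\mu(Q)}\int_Q\bigl(g(y)-g(x)\bigr)\,d\mu(y)\,\psi_Q(x).
\]
If $\psi_Q(x)\ne0$, then $x\in CB_Q$. For $y\in Q\subset B(z_Q,C_1\delta^k)$ this yields $d(x,y)\le A_0(C+C_1)\delta^k$. Hence
\[
|T_kg(x)-g(x)|\le\omega_g\bigl(A_0(C+C_1)\delta^k\bigr),
\]
where we used $\sum_Q\psi_Q=1$ and $\omega_g$ is the modulus of continuity of $g$. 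Since $g$ is continuous on the closure of the support, which is bounded, one needs uniform continuity of $g$. Here I would point out that a continuous function with bounded support on a space of homogeneous type is uniformly continuous if bounded closed sets are compact. That is not assumed, so I would instead pass through Lemma~\ref{lem:260720-1}-type reasoning, or first replace $g$ by a Lipschitz approximation. I expect this to be the delicate point: the density argument is cleanest if we first show that bounded-support Lipschitz (uniformly continuous) functions are dense in $\widetilde{\mathcal B}(X)$, via the local embedding of $L^\infty$ convergence on a fixed ball into $\mathcal B$, and then run the estimate for those.

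Moreover, for $k\ge0$ every nonzero term above is supported in a fixed ball $B(x_0,R')$, with $R'$ depending only on $R$, $A_0$, $C$, $C_1$. Thus $T_kg-g\to0$ uniformly and all these functions share a common bounded support. The local embedding property stated after the definition of ball Banach function spaces then gives $\|T_kg-g\|_{\mathcal B(X)}\to0$.

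\textbf{Step 3: density.} Let $f\in\widetilde{\mathcal B}(X)$ and $\varepsilon>0$, and pick $g$ from the dense class with $\|f-g\|_{\mathcal B(X)}<\varepsilon$. Then
\[
\|T_kf-f\|_{\mathcal B(X)}\le\|T_k(f-g)\|_{\mathcal B(X)}+\|T_kg-g\|_{\mathcal B(X)}+\|g-f\|_{\mathcal B(X)}\le(M+1)\varepsilon+\|T_kg-g\|_{\mathcal B(X)}.
\]
Letting $k\to\infty$ and then $\varepsilon\to0$ concludes the proof.

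The main obstacle is the uniform continuity issue in Step 2. The uniform bound in Step 1 is immediate from the hypothesis.
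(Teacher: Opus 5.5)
Your Steps~1 and~3 are exactly the paper's argument: the domination $|T_kf|\le\widetilde{T_k}[|f|]$ and the $\varepsilon$-density argument. Step~2 also has the paper's structure: uniform convergence $T_kg\to g$, a common bounded support, and the local embedding. The only place you depart is the uniform continuity of $g\in C_{\mathrm c}(X)$, which the paper simply asserts (``Since $f$ is uniformly continuous''). You are right that this does not follow from continuity plus \emph{bounded} support when $X$ is not assumed complete. But your repair does not close the gap, so Step~2 remains unproved as written. A uniform limit of uniformly continuous functions is uniformly continuous. Hence a $g\in C_{\mathrm c}(X)$ that is not uniformly continuous cannot be approximated in $L^\infty$ on a ball by Lipschitz functions with bounded support. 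And Lemma~\ref{lem:260720-1} takes equicontinuity as a hypothesis, so it cannot produce it.

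In fact no argument closes this without an extra hypothesis. Take $X=\mathbb R\setminus\{0\}$ with the Euclidean metric and Lebesgue measure, the open dyadic intervals as cubes, piecewise linear hat functions as the $\psi_Q$, and $\mathcal B=L^\infty(X)$. This is a ball Banach function space, and $\widetilde{T_k}$ is uniformly bounded on it by \eqref{eq:260722-5}. Let $g(x)=\phi(x)\sin(1/x)$ with $\phi\in C_{\mathrm c}(\mathbb R)$ and $\phi=1$ near $0$. Then $g\in C_{\mathrm c}(X)$, while $T_kg$ is Lipschitz on $X\cap(-1,1)$, since only finitely many Lipschitz $\psi_Q$ meet that set. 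Compare the points $x_m=(\pi/2+2\pi m)^{-1}$ and $y_m=(3\pi/2+2\pi m)^{-1}$: there $g=1$ and $g=-1$, while $|x_m-y_m|\to0$. This gives $\|T_kg-g\|_{L^\infty(X)}\ge1$ for every $k$.

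So what is missing is a hypothesis, not a lemma. One option is to take supports compact; the paper's proof tacitly does this when it writes ``Since $f$ has compact support''. Another is to assume $X$ complete; bounded closed sets are then compact, because doubling makes bounded sets totally bounded. In either case uniform continuity follows by the usual argument. Suppose $d(x_n,y_n)\to0$ and $|g(x_n)-g(y_n)|\ge\varepsilon$. Then one of the two points lies in the support, say $x_n$, and a subsequence converges to some $x$. The quasi-triangle inequality gives $y_n\to x$ as well, which contradicts continuity at $x$. With that in place, your Steps~1--3 are precisely the paper's proof.
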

Prior to the proof, we note that the sum defining
$T_k$ is locally finite by \eqref{eq:260722-5}.
Since $\widetilde{T_k}$ is bounded on $\mathcal B(X)$,
the operator $T_k$ is also a bounded linear operator
on $\mathcal B(X)$.
\begin{proof}
For each $Q\in\mathcal D_k$, 
we abbreviate
\[
A_Q(f)
=
\frac1{\mu(Q)}
\int_Q f(y)\,d\mu(y).
\]
Then
\[
T_kf
=
\sum_{Q\in\mathcal D_k}
A_Q(f)\psi_Q.
\]

Fix $x\in X$.
By the triangle inequality and 
\eqref{eq:260712-1},
\[
|T_kf(x)|
\le
\sum_{Q \in \mathcal{D}_k} |A_Q(f)|\chi_{C B_Q}(x)
\le
\widetilde{T_k}[|f|](x).
\]
The 
uniform boundedness of dyadic averaging operators
gives
\[
\|T_kf\|_{{\mathcal B}(X)}
\le
\|\widetilde{T_k}[|f|]\|_{{\mathcal B}(X)}
\le
C\|\,|f|\,\|_{{\mathcal B}(X)}
=
C\|f\|_{{\mathcal B}(X)},
\]
where $C$ is independent of $k$.

Let
$f\in C_{\mathrm c}(X)$.
Since $f$ is uniformly continuous, for every
$\varepsilon>0$ there exists $\eta>0$ such that
$
|f(x)-f(y)|<\varepsilon 
$
whenever
$
d(x,y)<\eta$.

Let $C$ be the constant appearing in \eqref{eq:260712-1}.
Let $a>1$.
Choose $k$ sufficiently large so that every cube in
$\mathcal D_k$ has diameter less than $\eta/(aC)$.
Furthermore, by taking $a$ sufficiently large, we may assume that
\[
d(x,y)<\eta
\]
whenever $x\in C B_Q$ and $y\in Q$.

If $\psi_Q(x)\neq0$, then $x\in C B_Q$ and therefore
$d(x,y)<\eta$
for any
$y\in Q$.
Hence
\[
|A_Q(f)-f(x)|
\le
\frac1{\mu(Q)}
\int_Q |f(y)-f(x)|\,d\mu(y)
<
\varepsilon .
\]

Using \eqref{eq:260712-2},
we obtain
\[
\begin{aligned}
|T_kf(x)-f(x)|
&=
\left|
\sum_{Q \in \mathcal{D}_k}
(A_Q(f)-f(x))\psi_Q(x)
\right|
\le
\sum_{Q \in \mathcal{D}_k}
|A_Q(f)-f(x)|\psi_Q(x)
<
\varepsilon .
\end{aligned}
\]

Thus, we conclude
\[
\lim_{k \to \infty}
\|T_kf-f\|_{L^\infty(X)}
=0 .
\]

Since $f$ has compact support and $T_kf$ has support contained in a
fixed bounded neighborhood of $\operatorname{supp}f$
as long as $k \gg 1$, 
we conclude
from
the local
embedding property of the ball Banach function space that $T_k f$ converges back to $f$
in ${\mathcal B}(X)$.

Finally, let
$f\in\widetilde{\mathcal B}(X)$
be arbitrary.
For every $\varepsilon>0$ choose
$g\in C_{\mathrm c}(X)$ such that
$
\|f-g\|_{{\mathcal B}(X)}<\varepsilon .
$

Then
\begin{align*}
\|T_kf-f\|_{{\mathcal B}(X)}
&\le
\|T_k(f-g)\|_{{\mathcal B}(X)}
+
\|T_kg-g\|_{{\mathcal B}(X)}
+
\|g-f\|_{{\mathcal B}(X)}\\
&\le
(C+1)\varepsilon
+
\|T_kg-g\|_{{\mathcal B}(X)}.
\end{align*}

Since $g \in C_{\rm c}(X)$,
we have
$
\|T_kg-g\|_{{\mathcal B}(X)}\to0 .
$
Therefore,
\[
\limsup_{k\to\infty}
\|T_kf-f\|_{{\mathcal B}(X)}
\le
(C+1)\varepsilon .
\]

Since $\varepsilon>0$ is arbitrary,
$\displaystyle
\lim_{k\to\infty}
\|T_kf-f\|_{{\mathcal B}(X)}
=
0 .
$
\end{proof}

Motivated by Theorem \ref{thm:dyadic-approximation},
we present the following definition.
\begin{definition}
We define
\[
\mathcal B_{\rm loc}(X)
=
\left\{
f\in\mathcal B(X):
\lim_{k\to\infty}
\|T_kf-f\|_{\mathcal B(X)}
=
0
\right\}.
\]
\end{definition}

Thus, $\mathcal B_0(X)$ consists of all functions which are
approximated in $\mathcal B(X)$ by the dyadic averaging operators.

\begin{definition}\label{defi:260722-5}
Let $x_0\in X$ be fixed.
We define
\[
\mathcal B_\infty(X)
=
\left\{
f\in\mathcal B(X):
\lim_{R\to\infty}
\left\|
f\chi_{X\setminus B(x_0,R)}
\right\|_{\mathcal B(X)}
=
0
\right\}.
\]
The space $\mathcal B_\infty(X)$ is called the space of functions
vanishing at infinity in the $\mathcal B(X)$-norm.

The above definition is independent of the choice of the reference point
$x_0$.
\end{definition}

\begin{proposition}
\label{prop:Binfty-closed}
The space $\mathcal B_\infty(X)$ is a closed subspace of
$\mathcal B(X)$.
\end{proposition}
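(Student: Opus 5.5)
The plan is to check the two defining properties in turn, linearity and closedness, using only the lattice property (ii) of a ball Banach function space and the triangle inequality in $\mathcal B(X)$. Throughout, fix the reference point $x_0$ and write $E_R=X\setminus B(x_0,R)$.

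\emph{Subspace.} Let $f,g\in\mathcal B_\infty(X)$ and $\lambda$ a scalar. Then $(\lambda f+g)\chi_{E_R}=\lambda f\chi_{E_R}+g\chi_{E_R}$, so
\[
\|(\lambda f+g)\chi_{E_R}\|_{\mathcal B(X)}
\le|\lambda|\,\|f\chi_{E_R}\|_{\mathcal B(X)}+\|g\chi_{E_R}\|_{\mathcal B(X)},
\]
and the right-hand side tends to $0$ as $R\to\infty$. Hence $\lambda f+g\in\mathcal B_\infty(X)$. I will also note that the map $R\mapsto\|f\chi_{E_R}\|_{\mathcal B(X)}$ is nonincreasing by property (ii), since $|f\chi_{E_{R'}}|\le|f\chi_{E_R}|$ for $R'\ge R$. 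Consequently, the limit in the definition exists as an infimum.

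\emph{Closedness.} Let $f_j\in\mathcal B_\infty(X)$ with $f_j\to f$ in $\mathcal B(X)$. The key pointwise inequality is $|(f-f_j)\chi_{E_R}|\le|f-f_j|$, and property (ii) turns it into
\[
\|f\chi_{E_R}\|_{\mathcal B(X)}\le\|f-f_j\|_{\mathcal B(X)}+\|f_j\chi_{E_R}\|_{\mathcal B(X)}.
\]
Given $\varepsilon>0$, first choose $j$ with $\|f-f_j\|_{\mathcal B(X)}<\varepsilon$. Then choose $R_0$ such that $\|f_j\chi_{E_R}\|_{\mathcal B(X)}<\varepsilon$ for all $R\ge R_0$. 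This gives $\limsup_{R\to\infty}\|f\chi_{E_R}\|_{\mathcal B(X)}\le2\varepsilon$, so $f\in\mathcal B_\infty(X)$.

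There is no real obstacle in this argument. The one point to be careful about is that the inequality $\|f\chi_{E}\|\le\|f\|$ relies on the lattice axiom (ii) and not on any absolute continuity of the norm, so no extra hypotheses on $\mathcal B(X)$ are needed.

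Independence from the choice of $x_0$ (asserted just before the statement) follows the same way. For another point $x_1$, the quasi-triangle inequality gives $X\setminus B(x_1,R')\subset X\setminus B(x_0,R)$ once $R'\ge A_0(R+d(x_0,x_1))$, and monotonicity then transfers the decay from one center to the other.
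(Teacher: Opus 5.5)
Your proof is correct and follows essentially the same argument as the paper: approximate $f$ by some $f_j$ with $\|f-f_j\|_{\mathcal B(X)}<\varepsilon$, then combine the lattice property (ii), the triangle inequality and the monotonicity in $R$ to get $\|f\chi_{X\setminus B(x_0,R)}\|_{\mathcal B(X)}<2\varepsilon$ for large $R$. Your explicit checks of linearity and of independence from $x_0$ fill in points the paper only asserts.
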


\begin{proof}
Let $\{f_j\}_{j=1}^{\infty}$ be a sequence in
$\mathcal B_\infty(X)$ such that
$f_j\to f$
in $\mathcal B(X)$
as $j \to \infty$.
We aim to show that
$f\in\mathcal B_\infty(X)$.

Let $\varepsilon>0$.
Choose $j$ sufficiently large so that
$
\|f-f_j\|_{\mathcal B(X)}
<
\varepsilon .
$
Since
$f_j\in\mathcal B_\infty(X)$,
there exists $R>0$ such that
$
\|f_j\chi_{X\setminus B(x_0,R)}\|_{\mathcal B(X)}
<
\varepsilon .
$
Therefore,
\[
\begin{aligned}
\|f\chi_{X\setminus B(x_0,R)}\|_{\mathcal B(X)}
&\le
\|(f-f_j)\chi_{X\setminus B(x_0,R)}\|_{\mathcal B(X)}
+
\|f_j\chi_{X\setminus B(x_0,R)}\|_{\mathcal B(X)}
\\
&\le
\|f-f_j\|_{\mathcal B(X)}
+
\|f_j\chi_{X\setminus B(x_0,R)}\|_{\mathcal B(X)}
\\
&<
2\varepsilon .
\end{aligned}
\]
Since $\varepsilon>0$ is arbitrary
and $\|f\chi_{X\setminus B(x_0,R)}\|_{\mathcal B(X)}$ is monotone in $R$,
\[
\lim_{R\to\infty}
\|f\chi_{X\setminus B(x_0,R)}\|_{\mathcal B(X)}
=
0 .
\]
Hence
$f\in\mathcal B_\infty(X)$.
\end{proof}

\begin{proposition}
\label{thm:B0-Binfty}Assume that 
the uniform boundedness of dyadic averaging operators
holds.
Then
$
\mathcal B_0(X)\cap\mathcal B_\infty(X)
\subset\widetilde{\mathcal B}(X).
$
\end{proposition}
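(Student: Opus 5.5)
Here $\mathcal B_0(X)$ is read as the space $\mathcal B_{\rm loc}(X)$ just defined, i.e.\ the functions with $\|T_kf-f\|_{\mathcal B(X)}\to0$. The plan is to approximate $f\in\mathcal B_0(X)\cap\mathcal B_\infty(X)$ in two stages. First cut $f$ off at infinity using the $\mathcal B_\infty$ condition; then replace the truncated function by $T_k$ of it, which is Lipschitz. Since $\widetilde{\mathcal B}(X)$ is closed, it suffices to produce, for each $\varepsilon>0$, some $g\in C_{\mathrm c}(X)$ with $\|f-g\|_{\mathcal B(X)}\lesssim\varepsilon$.

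Fix $\varepsilon>0$ and $x_0$. By $f\in\mathcal B_\infty(X)$, choose $R$ with $\|f\chi_{X\setminus B(x_0,R)}\|_{\mathcal B(X)}<\varepsilon$, and by $f\in\mathcal B_0(X)$, choose $k$ with $\|T_kf-f\|_{\mathcal B(X)}<\varepsilon$. Set $f_R=f\chi_{B(x_0,R)}$ and take the candidate $g=T_kf_R$. Then
\[
\|f-g\|_{\mathcal B(X)}\le\|f-T_kf\|_{\mathcal B(X)}+\|T_k(f-f_R)\|_{\mathcal B(X)}
\le\varepsilon+C\|f\chi_{X\setminus B(x_0,R)}\|_{\mathcal B(X)}\le(1+C)\varepsilon .
\]
Here $C$ comes from the uniform bound $\|T_kh\|_{\mathcal B(X)}\le C\|h\|_{\mathcal B(X)}$, which was established at the start of the proof of Theorem~\ref{thm:dyadic-approximation} from the uniform boundedness of dyadic averaging operators. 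Since $k$ is chosen independently of $R$ and $C$ does not depend on $k$, the order of the choices causes no problem.

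It remains to verify that $g\in C_{\mathrm c}(X)$, and this is the step requiring care. Only cubes $Q\in\mathcal D_k$ meeting $B(x_0,R)$ have $A_Q(f_R)\ne0$. Each such $Q$ lies in a ball of radius $C_1\delta^k$, so $\operatorname{supp}\psi_Q\subset CB_Q$ stays inside a fixed ball $B(x_0,R')$ by the quasi-triangle inequality; hence $g$ has bounded support. For continuity, the sum is locally finite by \eqref{eq:260722-5}, and each $\psi_Q$ is Lipschitz, so $g$ is locally a finite sum of continuous functions. The coefficients $A_Q(f_R)$ are finite because $f$ is locally integrable by property (iv) of ball Banach function spaces.

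The subtle point is the local finiteness. Estimate \eqref{eq:260722-5} gives only bounded overlap, whereas continuity needs a neighbourhood of each point met by finitely many supports. I would obtain this from doubling: the balls $B_Q$, $Q\in\mathcal D_k$, are disjoint with radius comparable to $\delta^k$. Hence a ball of radius $\sim\delta^k$ around a given point meets only boundedly many of the $CB_Q$, by a standard volume-counting argument. If necessary, one can also simply assume that the partition of unity was chosen locally finite, as is implicit in the phrase ``the sum defining $T_k$ is locally finite''. Combining the steps, $f$ lies in the closure of $C_{\mathrm c}(X)$, i.e.\ $f\in\widetilde{\mathcal B}(X)$.
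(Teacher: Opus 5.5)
Your proof is correct and follows the paper's argument. In both, one truncates $f$ using the $\mathcal B_\infty$ condition, applies $T_k$, and combines $\|T_kf-f\|_{\mathcal B(X)}<\varepsilon$ with the uniform bound on $T_k$ to get an approximant within $(1+C)\varepsilon$. The one difference is the truncation set: the paper uses $\chi_{Q_0}$ for a dyadic cube $Q_0\in\mathcal D_{-k_0}$ assumed to contain $B(x_0,R)$, but such a cube need not exist (e.g.\ for the standard dyadic grid on $\mathbb R^n$ with $x_0=0$). Your truncation by $\chi_{B(x_0,R)}$, with bounded support read off from $\operatorname{supp}\psi_Q\subset CB_Q$, avoids that issue.
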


\begin{proof}
Let $\varepsilon>0$
and
$
f\in
\mathcal B_0(X)\cap\mathcal B_\infty(X).
$

Since
$f\in\mathcal B_\infty(X)$,
there exists $R>0$ such that
\[
\|f\chi_{X\setminus B(x_0,R)}\|_{\mathcal B(X)}
<
\varepsilon .
\]

Choose $k_0\in\mathbb N$ so that the 
unique dyadic cube
$
Q_0\in\mathcal D_{-k_0}
$
containing $x_0$ satisfies
$
B(x_0,R)\subset Q_0 .
$
Then
\[
\chi_{Q_0}f-f
=
-f\chi_{X\setminus Q_0},
\]
and consequently,
\[
\|\chi_{Q_0}f-f\|_{\mathcal B(X)}
\le
\|f\chi_{X\setminus B(x_0,R)}\|_{\mathcal B(X)}
<
\varepsilon .
\]

Since
$f\in\mathcal B_0(X)$,
there exists $k_1\ge -k_0$ such that
\[
\|T_kf-f\|_{\mathcal B(X)}
<
\varepsilon ,
\qquad k\ge k_1 .
\]

For such $k$, every cube in $\mathcal D_k$ is either contained in
$Q_0$ or disjoint from $Q_0$.
Hence
\[
T_k(\chi_{Q_0}f)
=
\sum_{Q\in\mathcal D_k, Q \subset Q_0}
\left(
\frac1{\mu(Q)}
\int_Q f(y)\,d\mu(y)
\right)
\psi_Q .
\].

Therefore,
letting $C$ be the uniform bound of $T_k$
acting on $\mathcal B(X)$, we obtain
\[
\begin{aligned}
\|T_k(\chi_{Q_0}f)-T_k f\|_{\mathcal B(X)}
\le C
\|\chi_{Q_0}f-f\|_{\mathcal B(X)}
<
C\varepsilon .
\end{aligned}
\]

Combining this with the previous estimate gives
\[
\begin{aligned}
\|T_k(\chi_{Q_0}f)-f\|_{\mathcal B(X)}
\le
\|T_k(\chi_{Q_0}f)-T_k f\|_{\mathcal B(X)}
+
\|T_k f-f\|_{\mathcal B(X)}
<
(1+C)\varepsilon .
\end{aligned}
\]

Finally,
observe that
$T_k(\chi_{Q_0}f)\in C_{\mathrm c}(X)$.
Hence $f$ belongs to the closure of $C_{\mathrm c}(X)$ in
$\mathcal B(X)$.
Therefore,
$
f\in\widetilde{\mathcal B}(X).
$
\end{proof}

The following theorem characterizes the closure of compactly supported functions
in terms of the local and global vanishing properties of the norm.
\begin{theorem}[Characterization]
\label{thm:characterization}
Let $(X,d,\mu)$ be a space of homogeneous type.
Assume that 
the uniform boundedness of dyadic averaging operators
holds.
Then
\begin{equation}\label{eq:260712-3}
\widetilde{\mathcal B}(X)
=
\mathcal B_0(X)
\cap
\mathcal B_\infty(X).
\end{equation}
\end{theorem}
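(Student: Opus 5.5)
We prove the two inclusions in \eqref{eq:260712-3} separately. Throughout, $\mathcal B_0(X)$ is the space the paper calls $\mathcal B_{\rm loc}(X)$: the functions $f$ with $\|T_kf-f\|_{\mathcal B(X)}\to0$. The inclusion $\mathcal B_0(X)\cap\mathcal B_\infty(X)\subset\widetilde{\mathcal B}(X)$ is exactly Proposition~\ref{thm:B0-Binfty}, proved under the same hypothesis (uniform boundedness of dyadic averaging operators), so it only needs to be quoted. One point there should be checked: that $T_k(\chi_{Q_0}f)\in C_{\mathrm c}(X)$. This holds because only finitely many $Q\in\mathcal D_k$ with $Q\subset Q_0$ occur, which follows from doubling and the fact that $Q_0$ is bounded. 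Each $\psi_Q$ is Lipschitz with support in the bounded set $CB_Q$, so the finite sum is continuous with bounded support.

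For the reverse inclusion $\widetilde{\mathcal B}(X)\subset\mathcal B_0(X)\cap\mathcal B_\infty(X)$, we treat the two spaces one at a time.
\begin{enumerate}
\item[(a)] $\widetilde{\mathcal B}(X)\subset\mathcal B_0(X)$ is the content of Theorem~\ref{thm:dyadic-approximation}. It gives $\|T_kf-f\|_{\mathcal B(X)}\to0$ for every $f\in\widetilde{\mathcal B}(X)$.
\item[(b)] For $\widetilde{\mathcal B}(X)\subset\mathcal B_\infty(X)$, first note that $C_{\mathrm c}(X)\subset\mathcal B_\infty(X)$. Indeed, if $g\in C_{\mathrm c}(X)$, then $\operatorname{supp}g\subset B(x_0,R_0)$ for some $R_0$. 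Hence $g\chi_{X\setminus B(x_0,R)}=0$ for $R\ge R_0$, and the norm vanishes identically for large $R$. We also use that $g\in\mathcal B(X)$: $|g|\le\|g\|_{L^\infty}\chi_{B(x_0,R_0)}$, so this follows from properties (ii) and (iii) of ball Banach spaces. Proposition~\ref{prop:Binfty-closed} says $\mathcal B_\infty(X)$ is closed in $\mathcal B(X)$. Therefore it contains the closure of $C_{\mathrm c}(X)$, namely $\widetilde{\mathcal B}(X)$.
\end{enumerate}

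Combining (a), (b), and Proposition~\ref{thm:B0-Binfty} yields \eqref{eq:260712-3}.

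The main obstacle is not in this assembly but in confirming that the ingredients hold as stated. This requires two checks.
\begin{itemize}
\item Definition~\ref{defi:260722-5} must be independent of the base point $x_0$, since the proof of Proposition~\ref{thm:B0-Binfty} recentres at the point $x_0$ lying in $Q_0$. This follows from the quasi-triangle inequality: $X\setminus B(x_1,A_0(R+d(x_0,x_1)))\subset X\setminus B(x_0,R)$, combined with the lattice property (ii).
\item In Proposition~\ref{thm:B0-Binfty}, a cube $Q_0\in\mathcal D_{-k_0}$ containing $B(x_0,R)$ must exist. This is the step I expect to need the most care. The Christ cubes satisfy $Q_\alpha^{-k}\supset B(z_\alpha^{-k},C_0\delta^{-k})$, but the centres $z_\alpha^{-k}$ may drift, so the cube containing $x_0$ need not be centred near $x_0$. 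If the containment fails, I would replace $\chi_{Q_0}$ by $\chi_E$, where $E$ is the finite union of the cubes in $\mathcal D_{-k_0}$ that meet $B(x_0,R)$. This union is still bounded and still a union of cubes, so the nesting argument (every $Q\in\mathcal D_k$ with $k\ge -k_0$ lies inside $E$ or outside it) and the finiteness of the sum defining $T_k(\chi_Ef)$ go through unchanged.
\end{itemize}
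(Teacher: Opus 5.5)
Your argument is correct and is the paper's own: Theorem~\ref{thm:dyadic-approximation} gives $\widetilde{\mathcal B}(X)\subset\mathcal B_0(X)$, the inclusion $C_{\mathrm c}(X)\subset\mathcal B_\infty(X)$ together with the closedness in Proposition~\ref{prop:Binfty-closed} gives $\widetilde{\mathcal B}(X)\subset\mathcal B_\infty(X)$, and Proposition~\ref{thm:B0-Binfty} gives the reverse inclusion. Your doubt about the cube $Q_0$ is justified (for the standard dyadic cubes of $\mathbb R^n$ no single cube contains $B(0,R)$), and your finite-union replacement repairs that step; in fact no nesting is needed, since $T_k(\chi_{B(x_0,R)}f)$ is already a finite sum of the Lipschitz functions $\psi_Q$ and hence lies in $C_{\mathrm c}(X)$.
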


\begin{proof}
We first prove the inclusion
\[
\widetilde{\mathcal B}(X)
\subset
\mathcal B_0(X)\cap\mathcal B_\infty(X).
\]

Let
$f\in\widetilde{\mathcal B}(X)$
to this end.

By Theorem~\ref{thm:dyadic-approximation},
\[
\lim_{k\to\infty}
\|T_kf-f\|_{\mathcal B(X)}
=
0,
\]
and therefore
$
f\in\mathcal B_0(X).
$

Moreover,
$
C_{\mathrm c}(X)\subset\mathcal B_\infty(X).
$
Since $\mathcal B_\infty(X)$ is closed by
Proposition~\ref{prop:Binfty-closed},
we obtain
\[
\widetilde{\mathcal B}(X)
\subset
\mathcal B_\infty(X).
\]

Hence,
$
\widetilde{\mathcal B}(X)
\subset
\mathcal B_0(X)
\cap
\mathcal B_\infty(X).
$

The reverse inclusion follows from
Proposition~\ref{thm:B0-Binfty}.
Therefore, \eqref{eq:260712-3}.
\end{proof}

We compare the assumptions
on $A_r$ and $\widetilde{T_k}$ in this paper.
\begin{remark}
If the operator $A_r$, $r>0$,
is uniformly bounded on $\mathcal B$,
then $\widetilde{T_k}$ is uniformly bounded
on $\mathcal B$ since
$|\widetilde{T_k}f| \lesssim A_{C r}(|f|)$
due to \eqref{eq:260722-5}
for any $f \in \mathcal B$.
\end{remark}

By using
Theorems
\ref{thm:main}
and
\ref{thm:characterization},
we can get a new compactness criterion.
\begin{corollary}[Kolmogorov--Riesz compactness in $\mathcal{B}(X)$
on spaces of homogeneous type]
Let $(X,d,\mu)$ be a space of homogeneous type, i.e.\ $d$ is a quasi-metric and $\mu$ is a doubling measure.
Assume that $A_r:\mathcal{B}(X) \to \mathcal{B}(X)$ is bounded uniformly over $r>0$.
A subset $\mathcal{F} \subset \mathcal B_0(X)
\cap
\mathcal B_\infty(X).
$ is relatively compact in $\mathcal{B}(X)$ if and only if the following conditions hold:

\begin{enumerate}
\item \textbf{Uniform integrability{\rm:}}
$\mathcal F$ is bounded on $\mathcal B(X)$, that is,
\[
\sup_{f \in \mathcal{F}} \|f\|_{\mathcal{B}} < \infty.
\]
\item \textbf{Tightness{\rm:}}
There exists an increasing sequence of concentric balls $B_R$ with radius $R$ such that
\[
\lim_{R \to \infty}\left(
\sup_{f \in \mathcal{F}} \|f\chi_{X \setminus B_R}\|_{\mathcal{B}}\right)=0.
\]

\item \textbf{Vanishing local oscillation{\rm:}}
For all $\varepsilon > 0$, there exists $r>0$ such that
\[
\sup_{f \in \mathcal{F}} \|f - A_r f\|_{\mathcal{B}} < \varepsilon.
\]
\end{enumerate}

Then $\mathcal{F}$ is relatively compact in ${\mathcal{B}}$.
\end{corollary}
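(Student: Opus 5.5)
The plan is to reduce the corollary to Theorem~\ref{thm:main} via the identification of Theorem~\ref{thm:characterization}. The corollary is stated under uniform boundedness of $A_r$. Theorem~\ref{thm:characterization}, however, requires uniform boundedness of the dyadic averaging operators $\widetilde{T_k}$.

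So the first step is to check that this hypothesis follows. By the Remark preceding the corollary, $|\widetilde{T_k}f|\lesssim A_{C\delta^k}(|f|)$ pointwise. This uses the bounded overlap \eqref{eq:260722-5} together with the doubling comparison $\mu(Q)\sim\mu(B(x,C\delta^k))$ for $x\in CB_Q$. Property (ii) of ball Banach function spaces (the lattice property) then gives $\|\widetilde{T_k}f\|_{\mathcal B}\lesssim \sup_{r>0}\|A_r\|_{\mathcal B\to\mathcal B}\,\|f\|_{\mathcal B}$ uniformly in $k$. I expect this step to need the most care. One must verify the pointwise domination with constants independent of $k$: if $x\in CB_Q$, then $Q\subset B(x,C'\delta^k)$ by the quasi-triangle inequality, and doubling gives $\mu(B(x,C'\delta^k))\lesssim\mu(Q)$. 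One should also note that $|f|\in\mathcal B$ with $\||f|\|_{\mathcal B}=\|f\|_{\mathcal B}$.

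Next, I apply Theorem~\ref{thm:characterization}, which yields $\mathcal B_0(X)\cap\mathcal B_\infty(X)=\widetilde{\mathcal B}(X)$, where $\mathcal B_0$ is understood as the space $\mathcal B_{\rm loc}$ defined via $T_k$. Hence the given family satisfies $\mathcal F\subset\widetilde{\mathcal B}(X)$. The hypotheses of Theorem~\ref{thm:main} are then exactly met: $X$ is a space of homogeneous type, the $A_r$ are uniformly bounded, and $\mathcal F\subset\widetilde{\mathcal B}$.

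Finally, the three conditions in the corollary (boundedness, tightness, vanishing local oscillation) coincide verbatim with those of Theorem~\ref{thm:main}. Its conclusion therefore transfers directly: $\mathcal F$ is relatively compact in $\mathcal B(X)$ if and only if the three conditions hold. No further argument is needed beyond quoting the theorem, since the corollary only rephrases the domain $\widetilde{\mathcal B}$ intrinsically.
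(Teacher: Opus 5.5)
Your proposal is correct and follows the paper's own route. The Remark preceding the corollary turns uniform boundedness of $A_r$ into uniform boundedness of $\widetilde{T_k}$, and your pointwise domination via bounded overlap and doubling is exactly what that Remark sketches; Theorem~\ref{thm:characterization} then gives $\mathcal F\subset\widetilde{\mathcal B}(X)$, and Theorem~\ref{thm:main} applies verbatim (strictly, only the inclusion $\mathcal B_0(X)\cap\mathcal B_\infty(X)\subset\widetilde{\mathcal B}(X)$ of Proposition~\ref{thm:B0-Binfty} is needed).
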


\section{Rearrangement invariant spaces}\label{sec:RI}
Theorem 
\ref{thm:main} immediately yields compactness criteria for many concrete Banach function spaces. 
However, when $\mathcal{B}(X)$
is rearrangement invariant,
we can loosen the assumption
as we will see.
\begin{definition}[Rearrangement-invariant space]
Let \((\Omega,\mu)\) be a measure space. 
For a measurable function \(f\), its distribution function is defined by
\[
d_f(\lambda)
=
\mu\bigl(\{x\in\Omega: |f(x)|>\lambda\}\bigr),
\qquad \lambda>0.
\]
The decreasing rearrangement of \(f\) is defined by
\[
f^*(t)
=
\inf\{\lambda>0:d_f(\lambda)\le t\},
\qquad t>0 .
\]

A Banach function space \(X(\Omega)\) is called
\emph{rearrangement-invariant} if, for any measurable functions
\(f\) and \(g\) satisfying
\[
g^*(t)\le f^*(t),
\qquad t>0,
\]
the condition
\[
f\in X(\Omega)
\]
implies
\[
g\in X(\Omega)
\]
and
\[
\|g\|_{X(\Omega)}
\le
\|f\|_{X(\Omega)}.
\]
In particular, the norm of \(X(\Omega)\) depends only on the
distribution function of a function. More precisely,
\[
f^*=g^*
\quad\Longrightarrow\quad
\|f\|_{X(\Omega)}
=
\|g\|_{X(\Omega)} .
\]
\end{definition}

We postulate some conditions on
the underlying space.
\begin{definition}[Resonant measure space]
A totally \(\sigma\)-finite measure space \((R,\mu)\) is called
\emph{resonant} if, for every
\[
f,g\in L^0(R,\mu),
\]
one has
\[
\int_0^\infty f^*(t)g^*(t)\,dt
=
\sup_h
\int_R |f(x)h(x)|\,d\mu(x),
\]
where the supremum is taken over all measurable functions \(h\)
equimeasurable with \(g\).
\end{definition}

Throughout this paper, we assume that the underlying measure space is
resonant. This assumption is needed in order to use the standard
duality and interpolation theory for rearrangement-invariant spaces.
For example, the measure space
\[
X={\mathbb Z},
\]
equipped with the measure
\[
\mu(n)=\frac{n^4+2}{n^4+1},
\qquad n\in{\mathbb Z},
\]
is not resonant and is therefore excluded
\cite[Chapter 2, Theorem 2.7]{BS}.
This restriction is not too strong.

We recall the fundamental definition
of Luxemberg \cite{BS}.
\begin{definition}[Representation space]
Let \(X(\Omega)\) be a rearrangement-invariant space on a measure space
\((\Omega,\mu)\). The \emph{representation space} of \(X(\Omega)\) is
the rearrangement-invariant space
\[
\overline{X}(0,\mu(\Omega))
\]
on the interval \((0,\mu(\Omega))\) defined by
\[
f\in X(\Omega)
\quad\Longleftrightarrow\quad
f^*\in \overline{X}(0,\mu(\Omega)),
\]
with the norm relation
\[
\|f\|_{X(\Omega)}
=
\|f^*\|_{\overline{X}(0,\mu(\Omega))}.
\]
Here \(f^*\) denotes the decreasing rearrangement of \(f\).
\end{definition}

The following theorem characterizes the relative compactness
in rearrangement-invariant spaces.
Compared to Theorem \ref{thm:main},
it is closer to the classical Kolmogorov--Riesz compactness criterion for
Lebesgue spaces to the setting of general rearrangement-invariant Banach
function spaces.
\begin{theorem}[Kolmogorov--Riesz theorem for rearrangement-invariant spaces]
\label{thm:kolmogorov-riesz-ri}
Let $X(\Omega)$ be a rearrangement-invariant Banach function space, and let
$X(\Omega)_{\mathrm a}$ denote its absolutely continuous part.
Suppose that $\mathcal F\subset X(\Omega)_{\mathrm a}$ is bounded.

Then $\mathcal F$ is relatively compact in $X(\Omega)$ if and only if the following conditions are satisfied:

\begin{enumerate}
\item
For every $R>0$, the family
\[
\{f\chi_{B(x_0,R)}:f\in\mathcal F\}
\]
is relatively compact in $L^1(B(x_0,R))$.

\item
$\mathcal F$ vanishes uniformly at infinity,
that is,
\[
\lim_{R\to\infty}
\sup_{f\in\mathcal F}
\|f\chi_{\Omega\setminus B(x_0,R)}\|_{\mathcal{B}(X)}=0.
\]

\item
$\mathcal F$ vanishes uniformly on sets of small measure,
that is,
\[
\lim_{\varepsilon\to0}
\sup_{f\in\mathcal F}
\sup_{\substack{E\subset\Omega\\ \mu(E)<\varepsilon}}
\|f\chi_E\|_{\mathcal{B}(X)}=0.
\]
\end{enumerate}
\end{theorem}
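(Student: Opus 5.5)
We prove the two implications separately. Throughout, $\Omega=X$ carries the doubling measure, so balls have finite measure. We also use the ball Banach function space property that $\int_B|f|\,d\mu\le C_B\|f\|_{X(\Omega)}$ for every ball $B$.

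\emph{Necessity.} Let $\mathcal F$ be relatively compact in $X(\Omega)$.
\begin{itemize}
\item[(1)] Restriction $f\mapsto f\chi_{B(x_0,R)}$ followed by the local embedding into $L^1(B(x_0,R))$ is a bounded linear map. Continuous images of relatively compact sets are relatively compact, so (1) holds.
\item[(2)] and (3). Fix $\eta>0$ and take a finite $\eta$-net $g_1,\dots,g_N\in\mathcal F\subset X(\Omega)_{\mathrm a}$. Each single $g_i$ has absolutely continuous norm. Hence $\|g_i\chi_{\Omega\setminus B(x_0,R)}\|\to0$ as $R\to\infty$, since $\chi_{\Omega\setminus B(x_0,R)}\to0$ pointwise.
\item[] For (3), we need $\sup_{\mu(E)<\varepsilon}\|g_i\chi_E\|\to0$ as $\varepsilon\to0$ for a single $g\in X_{\mathrm a}$. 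This is standard: otherwise there are sets $E_j$ with $\mu(E_j)<2^{-j}$ and $\|g\chi_{E_j}\|\ge c$. Then $\chi_{E_j}\to0$ a.e. by Borel--Cantelli, contradicting absolute continuity.
\item[] Lattice monotonicity and $\|f\chi_E\|\le\|(f-g_i)\chi_E\|+\|g_i\chi_E\|\le\eta+\|g_i\chi_E\|$ then transfer both properties uniformly to $\mathcal F$.
\end{itemize}

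\emph{Sufficiency.} Take a sequence $\{f_j\}\subset\mathcal F$; we must extract a Cauchy subsequence in $X(\Omega)$.
\begin{itemize}
\item[(a)] Use (1) with $R=1,2,\dots$ and a diagonal argument. This gives a subsequence with $f_j\chi_{B(x_0,R)}$ Cauchy in $L^1(B(x_0,R))$ for every $R$. Passing to a further subsequence, it also converges a.e. to some $F$, and Fatou-type property (v) gives $F\in X(\Omega)$.
\item[(b)] Given $\eta>0$, choose $R$ by (2) so that $\sup_{f}\|f\chi_{\Omega\setminus B(x_0,R)}\|<\eta$. Then
\[
\|f_j-f_k\|\le 2\eta+\|(f_j-f_k)\chi_{B(x_0,R)}\|.
\]
\item[(c)] Choose $\varepsilon$ by (3) so that $\|f\chi_E\|<\eta$ whenever $\mu(E)<\varepsilon$. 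Set $\lambda=(\sup_f\|f\|_{L^1(B(x_0,R))}+1)\cdot 2/\varepsilon$. By Chebyshev, $E_j=\{|f_j|>\lambda\}\cap B$ has measure less than $\varepsilon/2$. Define
\[
E_{jk}=\{|f_j-f_k|>\delta\}\cap B, \qquad B=B(x_0,R).
\]
By Chebyshev again, $\mu(E_{jk})\le\delta^{-1}\|f_j-f_k\|_{L^1(B)}<\varepsilon$ for $j,k$ large.
\item[(d)] Split $(f_j-f_k)\chi_B$ into its parts on $E_{jk}$ and on $B\setminus E_{jk}$:
\[
\|(f_j-f_k)\chi_B\|\le\|f_j\chi_{E_{jk}}\|+\|f_k\chi_{E_{jk}}\|+\|(f_j-f_k)\chi_{B\setminus E_{jk}}\|.
\]
The first two terms are below $\eta$ each, by the choice of $\varepsilon$. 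The last is at most $\delta\|\chi_B\|_{X(\Omega)}$, which is finite by axiom (iii). Choose $\delta=\eta/\|\chi_B\|$.
\end{itemize}
So $\{f_j\}$ is Cauchy in $X(\Omega)$ and converges by completeness. Note that the truncation level $\lambda$ from (c) is not actually needed once $E_{jk}$ is used.

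The main obstacle is step (c)--(d). It turns $L^1$-convergence on balls into $X$-norm convergence, and it needs convergence in measure together with the uniform absolute continuity (3), which itself uses $\mathcal F\subset X_{\mathrm a}$. Rearrangement invariance enters only mildly, through (3) being stated in terms of measure alone. We would also double-check that $\mu(B)<\infty$ (doubling) makes the in-measure estimates valid.
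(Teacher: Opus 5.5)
Your proof is correct, and for sufficiency it takes a somewhat different route from the paper.

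\textbf{The paper's route.} The paper reduces to $\Omega=B(x_0,1)$ and extracts a subsequence converging in $L^1$ and a.e.\ to a limit $f$. It uses the Fatou property to place $f$ in the space. It then truncates via the fundamental-function bound $f^*(t)\le\|f\|/\varphi(t)$; this is where boundedness of $\mathcal F$ and rearrangement invariance enter. Finally it applies Egorov's theorem. Uniform convergence on a set $A$ controls $\|(f_j-f)\chi_A\|$, and condition (3) controls the part on $\Omega\setminus A$, tacitly also for the limit $f$ via Fatou.

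\textbf{Your route.} You never produce a limit function and work with the Cauchy differences directly. The exceptional set $E_{jk}=\{|f_j-f_k|>\delta\}\cap B$ comes from Chebyshev and the $L^1(B)$-Cauchy property. It is allowed to depend on $(j,k)$ because (3) is uniform over all sets of small measure. The remainder is bounded by $\delta\|\chi_B\|$, and completeness of $X(\Omega)$ finishes the argument.

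\textbf{What each approach buys.} Your argument avoids Egorov, the a.e.\ subsequence, Fatou and the truncation step. It also shows that sufficiency needs neither boundedness of $\mathcal F$, nor $\mathcal F\subset X(\Omega)_{\mathrm a}$, nor rearrangement invariance. Only the lattice property and $\chi_B\in X(\Omega)$ for balls $B$ are used. As you note yourself, the level $\lambda$ and the limit $F$ in steps (a) and (c) are superfluous. The paper's route has only the minor advantage of identifying the limit explicitly.

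You also prove the necessity direction, which the paper omits. Your $\eta$-net together with the Borel--Cantelli argument correctly isolates the one place where $\mathcal F\subset X(\Omega)_{\mathrm a}$ is genuinely needed.
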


\begin{proof}
We prove only the sufficiency.

Let $\{f_j\}_{j=1}^{\infty}\subset\mathcal F$.
By condition (2), it is enough to prove the convergence on a fixed sufficiently
large ball. Hence, without loss of generality, we may assume that
$\Omega=B(x_0,1)$.

By condition (1), after passing to a subsequence if necessary,
we may assume that
\[
f_j\to f
\quad\text{in }L^1(\Omega).
\]
Passing to a further subsequence, we may also assume that
\[
f_j(x)\to f(x)
\quad\text{for almost every }x\in\Omega.
\]
The Fatou property shows that $f \in \mathcal B(X)$.

Since $X(\Omega)$ is a rearrangement-invariant Banach function space,
\[
f^*(t)
\le
\|\chi_{(0,t)}f^*\|_{\overline{{\mathcal B}(X)}}
\le
\frac{\|f\|_{\mathcal{B}(X)}}{\varphi_{\mathcal{B}(X)}(t)}
\quad(t>0),
\]
where $\varphi_{\mathcal{B}(X)}$ denotes the fundamental function of $\mathcal{B}(X)$.
Therefore, by truncating the functions if necessary, we may assume that
the sequence $\{f_j\}_{j=1}^{\infty}$ is uniformly bounded.

Let $\varepsilon>0$.
By Egorov's theorem, there exists a measurable set
$A\subset\Omega$ such that
\[
\mu(\Omega\setminus A)<\varepsilon
\]
and $\{f_j\}_{j=1}^{\infty}$ converges uniformly to $f$ on $A$.

Since the convergence is uniform on $A$, we have
\[
\|(f_j-f)\chi_A\|_{\mathcal{B}(X)}\longrightarrow0.
\]
On the other hand, condition (3) yields
\[
\sup_{j\ge1}
\|(f_j-f)\chi_{\Omega\setminus A}\|_{\mathcal{B}(X)}
\]
is arbitrarily small whenever $\varepsilon$ is sufficiently small.
Combining these two estimates, we conclude that
\[
\|f_j-f\|_{\mathcal{B}(X)}\longrightarrow0.
\]
Hence $\mathcal F$ is relatively compact in ${\mathcal B}(\Omega)$.
\end{proof}

Before proceeding further, we recall an elementary consequence of
the interpolation theory for rearrangement-invariant spaces.
\begin{remark}
Let \(T\) be a linear operator which is bounded on both
\(L^1(X)\) and \(L^\infty(X)\); namely,
\[
\|Tf\|_{L^1(X)}\le C_1\|f\|_{L^1(X)}
\quad (f \in L^1(X)),
\qquad
\|Tf\|_{L^\infty(X)}
\le C_\infty\|f\|_{L^\infty(X)} \quad (f \in L^\infty(X)).
\]
This leads to  the \(K\)-functional estimate:
\[
K(Tf,L^1(X),L^\infty(X))
\le \max(C_1,C_\infty) K(f,L^1(X),L^\infty(X))
\quad (f \in L^1(X)+L^\infty(X)).
\]
Since
\[
K(f,L^1(X),L^\infty(X))
=
\int_0^t f^*(s)\,ds  \quad (t>0)
\]
according to
\cite[Chapter 2, Theorem 6.2]{BS},
it follows that
\[
\int_0^t (Tf)^*(s)\,ds
\le
C\int_0^t f^*(s)\,ds ,
\qquad t>0 .
\]
Hence, in the notation of
\cite[Chapter 2, Definition 3.5]{BS},
\[
Tf\preceq C f .
\]
Therefore, by
\cite[Chapter 2, Theorem 4.6]{BS},
\(T\) is bounded on every rearrangement-invariant Banach function
space \(\mathcal B(X)\).

In particular, if \(\{T_r\}_{r>0}\) is a family of operators which is
uniformly bounded on \(L^1(X)\) and \(L^\infty(X)\), then it is
uniformly bounded on \(\mathcal B(X)\).
\end{remark}

\section{The Euclidean Case}\label{sec:euclidean}

We aim to investigate the structure of the space
$\mathcal B_0$.
In this section, we consider the special case where the underlying space is
${\mathbb R}^n$.
By revisiting the proofs of the propositions in Section~\ref{s4},
we observe that the system of Christ dyadic cubes may be replaced by
the standard dyadic cubes
which we define shortly.

Assume that the averaging operators
$\{A_r\}_{r>0}$ are uniformly bounded on $\mathcal B({\mathbb R}^n)$.
Denote by $Q_{jk}$ the dyadic cube
\[
Q_{jk}=2^{-j}\prod_{l=1}^n [k_l,k_l+1),
\]
where $k=(k_1,\dots,k_n)\in{\mathbb Z}^n$.
In this setting, we may choose the partition of unity in the form
\[
\psi_Q(x)=\psi(2^j x-k),
\]
where
$\psi\in C_{\mathrm c}({\mathbb R}^n)$
is a fixed function supported on $[-2,2]^n$.

The \emph{diamond subspace} of $\mathcal B({\mathbb R}^n)$, denoted by
$\overset{\diamond}{\mathcal B}({\mathbb R}^n)$, is defined by
\[
\overset{\diamond}{\mathcal B}({\mathbb R}^n)
=
\left\{
f\in\mathcal B({\mathbb R}^n):
\lim_{t\to0}e^{t\Delta}f=f
\text{ in }\mathcal B({\mathbb R}^n)
\right\}.
\]
Equivalently,
\[
\overset{\diamond}{\mathcal B}({\mathbb R}^n)
=
\left\{
f\in\mathcal B({\mathbb R}^n):
\lim_{t\to0}\psi_t*f=f
\text{ in }\mathcal B({\mathbb R}^n)
\right\},
\]
where
$\psi\in C_{\mathrm c}^{\infty}({\mathbb R}^n)$
satisfies
\[
\int_{{\mathbb R}^n}\psi(x)\,dx=1,
\]
and
\[
\psi_t(x)=t^{-n}\psi(x/t).
\]
See \cite{HNS17,SYYZ26}.
The following theorem identifies the diamond subspace with the subspace
of functions having absolutely continuous norm.
\begin{theorem}\label{lem:diamond=B0}
Assume that $\{A_r\}_{r>0}$ is uniformly bounded on $\mathcal B({\mathbb R}^n)$.
Then
\[
\overset{\diamond}{\mathcal B}({\mathbb R}^n)
=
\mathcal B_0({\mathbb R}^n).
\]
\end{theorem}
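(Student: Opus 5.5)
We prove the two inclusions separately. In both directions we compare the two approximation schemes through pointwise domination by averages, and then use the uniform boundedness of $\{A_r\}_{r>0}$ on $\mathcal B({\mathbb R}^n)$. Here $\mathcal B_0({\mathbb R}^n)$ is read as the space $\mathcal B_{\rm loc}$ of functions with $T_kf\to f$, built from the standard dyadic cubes and $\psi_Q(x)=\psi(2^jx-k)$. We take the mollifier description of $\overset{\diamond}{\mathcal B}({\mathbb R}^n)$, with $\psi\in C^\infty_{\rm c}$ and $\int\psi=1$; the heat-semigroup version then follows from the stated equivalence.

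\emph{Step 1: uniform bounds.} A compactly supported bounded kernel is dominated by a multiple of a ball average: $|\psi_t*f|\lesssim A_{ct}(|f|)$ pointwise. Hence $\sup_{t>0}\|\psi_t*\cdot\|_{\mathcal B\to\mathcal B}<\infty$. Likewise $|T_kf|\lesssim A_{c2^{-k}}(|f|)$, as in the Remark preceding the corollary. So $\{T_k\}$ and $\{\psi_t*\}$ are both uniformly bounded families on $\mathcal B({\mathbb R}^n)$.

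\emph{Step 2: $\mathcal B_0\subset\overset{\diamond}{\mathcal B}$.} Let $f\in\mathcal B_0$. Write
\[
\psi_t*f-f=\psi_t*(f-T_kf)+(\psi_t*T_kf-T_kf)+(T_kf-f).
\]
The first and third terms are at most $C\|f-T_kf\|_{\mathcal B}$ by Step 1, which is small once $k$ is fixed large. For fixed $k$, the middle term is the key point. Here $T_kf=\sum_Q a_Q\psi_Q$, where the $\psi_Q$ are Lipschitz with constant $\lesssim 2^k$ and $|a_Q|\lesssim 2^{kn}\int_{Q}|f|$. Thus $|\psi_t*T_kf-T_kf|(x)\lesssim t2^k\sum_Q|a_Q|\chi_{C'B_Q}(x)\lesssim t2^k A_{c2^{-k}}(|f|)(x)$ for $t\le 2^{-k}$, using the bounded overlap \eqref{eq:260722-5}. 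Its $\mathcal B$-norm is therefore $\lesssim t2^k\|f\|_{\mathcal B}\to0$ as $t\to0$. This requires no $C_{\rm c}$ approximation, which matters since $f$ need not lie in $\mathcal B_\infty$.

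\emph{Step 3: $\overset{\diamond}{\mathcal B}\subset\mathcal B_0$.} This direction is symmetric. Write
\[
T_kf-f=T_k(f-\psi_t*f)+(T_k(\psi_t*f)-\psi_t*f)+(\psi_t*f-f).
\]
The outer terms are again controlled by Step 1. For the middle term, fix $t$ and set $g=\psi_t*f$. Then $|\nabla g|\le t^{-1}|(\nabla\psi)_t*f|\lesssim t^{-1}A_{ct}(|f|)$. By the partition of unity \eqref{eq:260712-2},
\[
T_kg(x)-g(x)=\sum_Q\psi_Q(x)\,\frac1{|Q|}\int_Q(g(y)-g(x))\,dy .
\]
For $x\in CB_Q$ and $y\in Q$ we have $|x-y|\lesssim2^{-k}$, so the integrand is bounded by $2^{-k}\sup_{|z-x|\lesssim2^{-k}}|\nabla g(z)|\lesssim 2^{-k}t^{-1}A_{c't}(|f|)(x)$, provided $2^{-k}\le t$. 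Hence $\|T_kg-g\|_{\mathcal B}\lesssim 2^{-k}t^{-1}\|f\|_{\mathcal B}\to0$ as $k\to\infty$.

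\emph{Main obstacle.} The delicate point is the pointwise control of the supremum of the gradient over a small ball by a single average, with enlarged radius, of $|f|$. This follows from $\sup_{|z-x|\le s}\int|\nabla\psi_t(z-y)||f(y)|\,dy\le t^{-n-1}\|\nabla\psi\|_\infty\int_{B(x,s+Rt)}|f|$ together with doubling of Lebesgue measure. We must also check that $\psi$ and the dyadic $\psi_Q$ satisfy the support and overlap bounds, and that all sums are locally finite so that the pointwise manipulations are valid.
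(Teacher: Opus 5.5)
Your proposal is correct and takes essentially the paper's route. In both directions you use the uniform boundedness of $\{T_k\}$ and $\{\psi_t*\cdot\}$, obtained by domination by $A_r$, to reduce to the smooth approximants, and you bound their defect pointwise by a multiple of an average of $|f|$. The paper phrases Step~2 as closedness of $\overset{\diamond}{\mathcal B}({\mathbb R}^n)$ plus $T_kf\in\overset{\diamond}{\mathcal B}({\mathbb R}^n)$, using uniform continuity of $\psi$ where you use a Lipschitz bound, and it only sketches the reverse inclusion, which your gradient estimate in Step~3 fills in correctly.
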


\begin{proof}
We first prove that
\[
\mathcal B_0({\mathbb R}^n)\subset
\overset{\diamond}{\mathcal B}({\mathbb R}^n).
\]
Since
$\overset{\diamond}{\mathcal B}({\mathbb R}^n)$
is a closed subspace of $\mathcal B({\mathbb R}^n)$,
it is enough to show that
$T_kf\in\overset{\diamond}{\mathcal B}({\mathbb R}^n)$.

By the definition of $T_kf$,
\[
T_kf=\sum_{m\in\mathbb Z^n}a_m\psi(2^k\cdot-m),
\]
where each $a_m=a_m(f)$ denotes the average of $f$ over $Q_{k m}$.
Hence
\[
\psi_t*(T_kf)-T_kf
=
\sum_{m\in\mathbb Z^n}
a_m
\bigl(\psi_t*\psi-\psi\bigr)(2^k\cdot-m).
\]
Therefore,
using $A_r$ with $r=2^{2+n-k}$, we have
\[
\bigl|\psi_t*(T_kf)-T_kf\bigr|
\lesssim
\|\psi_t*\psi-\psi\|_{L^\infty}
A_{2^{2+n-k}}(|f|).
\]
Since
\[
\|\psi_t*\psi-\psi\|_{L^\infty}\to0
\quad (t\to0),
\]
and
$A_{2^{2+n-k}}$
is bounded on $\mathcal B({\mathbb R}^n)$,
we conclude that
\[
\psi_t*(T_kf)\to T_kf
\quad\text{in }\mathcal B({\mathbb R}^n).
\]
Thus
$T_kf\in\overset{\diamond}{\mathcal B}({\mathbb R}^n)$,
which proves
$
\mathcal B_0({\mathbb R}^n)\subset
\overset{\diamond}{\mathcal B}({\mathbb R}^n).
$

The reverse inclusion is proved by the same approximation argument as in the proof of Theorem~\ref{thm:main}, using the mollified functions $\psi_t*f$ in place of $T_kf$.
\end{proof}

\section{Examples of function spaces}
\label{section:Examples}

\subsection{Morrey spaces}
\label{section:Morrey spaces}

Let \(1\leq p\leq p_0<\infty\).

For a measurable function \(f\) defined on ${\mathbb R}^n$, define
\[
M_{p_0,p}(f;x,r)
:=
|B(x,r)|^{\frac1{p_0}-\frac1p}
\left(
\int_{B(x,r)}|f(y)|^p\,dy
\right)^{\frac1p}.
\]
The Morrey norm of $f$ is defined by
\begin{equation}\label{eq:130709-1A}
\|f\|_{\mathcal M^{p_0}_p}
:=
\sup_{x\in\mathbb R^n,\ R>0}
M_{p_0,p}(f;x,R).
\end{equation}
The Morrey space
\(\mathcal M^{p_0}_p(\mathbb R^n)\)
consists of all functions
\(f\in L^p_{\mathrm{loc}}(\mathbb R^n)\)
such that
$
\|f\|_{\mathcal M^{p_0}_p}<\infty .
$
Notice that
$
\mathcal M^{p_0}_{p_0}(\mathbb R^n)
=
L^{p_0}(\mathbb R^n)
$
with equality of norms.

Compactness properties of Morrey spaces have been studied extensively;
see, for example, Almeida and Samko \cite{AlSa}.
Following this approach, we introduce several vanishing Morrey spaces.

We follow \cite{AlSa} to present some definitions.
The following definition describes the vanishing behavior of the Morrey norm at small scales.
\begin{definition}{\rm \cite{AlSa}}\label{def:V0-Morrey}
The vanishing Morrey space at the origin,
denoted by \(V_0\mathcal M^{p_0}_p(\mathbb R^n)\),
is the subspace of
\(\mathcal M^{p_0}_p(\mathbb R^n)\)
consisting of all functions \(f\) satisfying
\[
\lim_{r\to0}
\sup_{x\in\mathbb R^n}
M_{p_0,p}(f;x,r)=0 .
\]
\end{definition}
In light of Definition~\ref{defi:260722-6},
we may identify
\[
({\mathcal M}^{p_0}_p)_{\mathrm c}({\mathbb R}^n)
=
V_0\mathcal M^{p_0}_p({\mathbb R}^n).
\]

Next, we introduce the corresponding vanishing condition at large scales.
\begin{definition}{\rm \cite{AlSa}}\label{def:V-infinity}
The space
\(V_\infty\mathcal M^{p_0}_p(\mathbb R^n)\)
consists of all functions
\(f\in\mathcal M^{p_0}_p(\mathbb R^n)\)
such that
\[
\lim_{r\to\infty}
\sup_{x\in\mathbb R^n}
M_{p_0,p}(f;x,r)=0 .
\]
\end{definition}

To describe the behavior at infinity after truncation, let
\[
\chi_N=\chi_{\mathbb R^n\setminus B(0,N)}
\]
and define
\[
A_{N,p}(f)
:=
\sup_{x\in\mathbb R^n}
\left(
\int_{B(x,1)}
|f(y)|^p\chi_N(y)\,dy
\right)^{\frac1p}.
\]
The following condition describes 
how the local Morrey norm outside large balls vanishes.
\begin{definition}{\rm \cite{AlSa}}\label{def:V-star}
The space
\(V^{(*)}\mathcal M^{p_0}_p(\mathbb R^n)\)
is defined as the set of all functions
\(f\in\mathcal M^{p_0}_p(\mathbb R^n)\)
such that
\[
\lim_{N\to\infty}A_{N,p}(f)=0 .
\]
\end{definition}
In light of Definition \ref{defi:260722-5},
we can say that
\(V^{(*)}\mathcal M^{p_0}_p(\mathbb R^n)
=(\mathcal M^{p_0}_p)_\infty(\mathbb R^n).\)

Combining the above three vanishing conditions, we define the following subspace of Morrey spaces.
\begin{definition}{\rm \cite{AlSa}}\label{def:V-star-0-infinity}
We define
\[
V^{(*)}_{0,\infty}
\mathcal M^{p_0}_p(\mathbb R^n)
:=
V_0\mathcal M^{p_0}_p(\mathbb R^n)
\cap
V_\infty\mathcal M^{p_0}_p(\mathbb R^n)
\cap
V^{(*)}\mathcal M^{p_0}_p(\mathbb R^n).
\]
\end{definition}

Using Theorems \ref{thm:characterization} and
\ref{lem:diamond=B0},
we characterize the closure of compactly supported functions in Morrey spaces.
\begin{theorem}[Characterization of the separable Morrey subspace]
\label{thm:Morrey-characterization}
Let \(1\leq p\leq p_0<\infty\).
Assume that the dyadic averaging operators are uniformly bounded on
\(\mathcal M^{p_0}_p({\mathbb R}^n)\).
Then
\begin{align*}
\widetilde{\mathcal M}^{p_0}_p({\mathbb R}^n)
=
\overline{C_{\rm c}({\mathbb R}^n)}^{\,\mathcal M^{p_0}_p({\mathbb R}^n)}
&=
\left\{
f\in\mathcal M^{p_0}_p({\mathbb R}^n):
\begin{aligned}
&
\lim_{r \to 0}
\|f-A_rf\|_{{\mathcal M}^{p_0}_p}=0,
\\
&
\lim_{r \to \infty}
\|f\chi_{B(0,r)^{\rm c}}\|_{{\mathcal M}^{p_0}_p}=0
\end{aligned}
\right\}\\
&
=
\left\{
f\in\overset{\diamond}{\mathcal M}{}^{p_0}_p({\mathbb R}^n):
\lim_{r \to \infty}
\|f\chi_{B(0,r)^{\rm c}}\|_{{\mathcal M}^{p_0}_p}=0
\right\}.
\end{align*}
\end{theorem}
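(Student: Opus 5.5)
Three equalities are claimed, and I will prove them in order; the plan is to reduce each to results already available. The first equality is simply the definition of $\widetilde{\mathcal M}^{p_0}_p(\mathbb R^n)$. We know $\mathcal M^{p_0}_p(\mathbb R^n)$ is a ball Banach function space: the lattice property and the Fatou property are clear, $\chi_B\in\mathcal M^{p_0}_p$, and $\int_B|f|\le |B|^{1-1/p_0}\|f\|_{\mathcal M^{p_0}_p}$ by H\"older. So under the stated uniform boundedness of the dyadic averaging operators, Theorem~\ref{thm:characterization} gives
\[
\widetilde{\mathcal M}^{p_0}_p(\mathbb R^n)=(\mathcal M^{p_0}_p)_0(\mathbb R^n)\cap(\mathcal M^{p_0}_p)_\infty(\mathbb R^n).
\]
The factor $(\mathcal M^{p_0}_p)_\infty$ is exactly the second condition in the middle set, because Definition~\ref{defi:260722-5} does not depend on the choice $x_0=0$. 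Likewise, the last equality follows from Theorem~\ref{lem:diamond=B0}, which identifies $(\mathcal M^{p_0}_p)_0$ with $\overset{\diamond}{\mathcal M}{}^{p_0}_p$. Here I take the uniform boundedness of $A_r$ on Morrey spaces as known: $|A_rf|\le M f$ and $M$ is bounded on $\mathcal M^{p_0}_p$ for $p>1$. For $p=1$ a direct Fubini-type estimate on balls should give the same conclusion, and I would note this point.

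The remaining step is to show that, for functions in $(\mathcal M^{p_0}_p)_\infty$, the dyadic condition $\|T_kf-f\|\to0$ is equivalent to $\|f-A_rf\|\to0$ as $r\to0$. I plan to route both directions through $C_{\mathrm c}$ density.

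\emph{($\Rightarrow$)} If $f\in\widetilde{\mathcal M}^{p_0}_p$, approximate $f$ by $g\in C_{\mathrm c}$. For such $g$ we have $A_rg\to g$ uniformly with supports in a fixed ball, so $\|A_rg-g\|_{\mathcal M^{p_0}_p}\to0$ by the local embedding property. Uniform boundedness of $A_r$ then gives $\|A_rf-f\|\le (C+1)\|f-g\|+\|A_rg-g\|$, which can be made arbitrarily small.

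\emph{($\Leftarrow$)} Suppose $\|f-A_rf\|\to0$ and $f$ vanishes at infinity. Fix $r$ and show $A_rf\in\widetilde{\mathcal M}^{p_0}_p$; since $\widetilde{\mathcal M}^{p_0}_p$ is closed, this suffices.
\begin{itemize}
\item First truncate: $\|A_r(f\chi_{B(0,R)^{\rm c}})\|\lesssim\|f\chi_{B(0,R)^{\rm c}}\|\to0$, so it is enough to treat $A_r(f\chi_{B(0,R)})$.
\item This function is bounded and supported in $B(0,R+r)$.
\item It is continuous, because $x\mapsto\int_{B(x,r)}h$ is continuous for $h\in L^1$ (Lebesgue measure of $B(x,r)\triangle B(y,r)$ tends to $0$) and $|B(x,r)|$ is constant.
\end{itemize}
Hence $A_r(f\chi_{B(0,R)})\in C_{\mathrm c}$, and so $f\in\widetilde{\mathcal M}^{p_0}_p$.

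The main obstacle I expect is confirming the hypotheses of Theorem~\ref{lem:diamond=B0} and the uniform boundedness of $A_r$ in the case $p=1$. Once that is settled, the argument is purely formal. I would therefore state the $A_r$ boundedness explicitly, either via the maximal operator for $p>1$ or via the direct estimate $\int_{B(x,R)}A_r|f|\le\sup_{y}\int_{B(y,R+r)}|f|$ rescaled. This estimate is favorable when $r\le R$; when $r>R$, it follows instead from the pointwise bound $A_r|f|\le |B(\cdot,r)|^{-1/p_0}\|f\|_{\mathcal M^{p_0}_1}$.
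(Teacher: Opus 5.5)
Your proposal is correct. For the description via $\overset{\diamond}{\mathcal M}{}^{p_0}_p$ you follow the paper, which justifies the theorem only by combining Theorem~\ref{thm:characterization} with Theorem~\ref{lem:diamond=B0}. Your check that $\{A_r\}_{r>0}$ is uniformly bounded on $\mathcal M^{p_0}_p(\mathbb R^n)$ for every $1\le p\le p_0$ is a needed addition. You use $M$ for $p>1$ and the two-regime estimate for $p=1$. Theorem~\ref{lem:diamond=B0} assumes this boundedness, while the statement only assumes dyadic boundedness, which by the Remark in Section~\ref{s4} then holds automatically.

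Where you depart from the paper is the middle set. In the two cited theorems $\mathcal B_0$ is defined through $T_k$, so the paper leaves the description by $\|f-A_rf\|_{\mathcal M^{p_0}_p}\to0$ implicit. Making it explicit would mean comparing the rough kernel $|B(0,1)|^{-1}\chi_{B(0,1)}$ with the smooth mollifiers that define $\overset{\diamond}{\mathcal M}{}^{p_0}_p$. Your direct density argument settles this. One inclusion follows from $C_{\rm c}$-approximation plus uniform boundedness of $A_r$. The other follows because $A_r(f\chi_{B(0,R)})\in C_{\rm c}(\mathbb R^n)$: $|B(x,r)|$ does not depend on $x$, and $x\mapsto\int_{B(x,r)}h$ is continuous for $h\in L^1$.

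Your argument shows that the middle equality needs no dyadic cubes and no Theorem~\ref{thm:characterization}. It uses only the uniform boundedness of $A_r$ and the translation invariance of Lebesgue measure. The same argument with $\psi_t*$ in place of $A_r$ would even give the last equality without Theorem~\ref{lem:diamond=B0}. This uses $|\psi_t*f|\lesssim A_{ct}|f|$ and the mollifier description of $\overset{\diamond}{\mathcal M}{}^{p_0}_p$.

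The paper's route, by contrast, specializes its general homogeneous-type framework. There $x\mapsto\mu(B(x,r))$ need not be continuous, so $A_r(f\chi_B)$ need not lie in $C_{\rm c}(X)$. That is why the paper works with $T_k$ and Lipschitz partitions of unity.
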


We next establish a compactness criterion for bounded subsets of Morrey spaces
as a corollary of Theorem \ref{thm:main}.
\begin{theorem}[Compactness criterion in Morrey spaces]
\label{thm:compact-Morrey}
Let
\[
\mathcal F\subset
V^{(*)}_{0,\infty}
\mathcal M^{p_0}_p(\mathbb R^n).
\]
Assume that

\[
\sup_{f\in\mathcal F}
\|f\|_{\mathcal M^{p_0}_p}<\infty
\]
and that the averaging operators satisfy
\[
\lim_{r\to0}
\sup_{f\in\mathcal F}
\|f-A_rf\|_{\mathcal M^{p_0}_p}=0 .
\]
Then \(\mathcal F\) is relatively compact in
\(\mathcal M^{p_0}_p(\mathbb R^n)\).
\end{theorem}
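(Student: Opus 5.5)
The plan is to deduce Theorem~\ref{thm:compact-Morrey} from Theorem~\ref{thm:main} with $\mathcal B(X)=\mathcal M^{p_0}_p(\mathbb R^n)$ on $(\mathbb R^n,|\cdot|,dx)$. This requires four things:
\begin{itemize}
\item that $\mathcal M^{p_0}_p(\mathbb R^n)$ is a ball Banach function space;
\item that the averaging operators $A_r$ are bounded on $\mathcal M^{p_0}_p$ uniformly in $r>0$;
\item that $\mathcal F\subset\widetilde{\mathcal M}{}^{p_0}_p(\mathbb R^n)$;
\item that the boundedness, tightness and vanishing local oscillation conditions of Theorem~\ref{thm:main} hold.
\end{itemize}
Boundedness and vanishing local oscillation are the hypotheses themselves, so the work lies in the other items.

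The ball Banach function space axioms are standard. The lattice property and the Fatou property (via monotone convergence in each $M_{p_0,p}(f;x,r)$ and interchange of suprema) are routine. Also $\chi_B\in\mathcal M^{p_0}_p$, and H\"older's inequality gives $\int_B|f|\le |B|^{1-1/p_0}\|f\|_{\mathcal M^{p_0}_p}$.

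For the uniform boundedness of $A_r$, observe that $A_rf=\frac{1}{|B(0,r)|}\chi_{B(0,r)}*f$. Minkowski's inequality and the translation invariance of the Morrey norm then give $\|A_rf\|_{\mathcal M^{p_0}_p}\le\|f\|_{\mathcal M^{p_0}_p}$. By the Remark after Theorem~\ref{thm:characterization}, this also yields the uniform boundedness of the dyadic averaging operators, so Theorems~\ref{thm:characterization} and~\ref{thm:Morrey-characterization} are available.

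Next, $\mathcal F\subset\widetilde{\mathcal M}{}^{p_0}_p$. Let $f\in\mathcal F$. By Theorem~\ref{thm:Morrey-characterization} it suffices to show $\lim_{r\to0}\|f-A_rf\|=0$ and $\lim_{R\to\infty}\|f\chi_{B(0,R)^{\rm c}}\|=0$. The first follows from the uniform oscillation hypothesis applied to the single function $f$. The second is where the membership $f\in V_{0,\infty}^{(*)}\mathcal M^{p_0}_p$ is used. Split the supremum in $M_{p_0,p}(f\chi_{B(0,R)^{\rm c}};x,\rho)$ according to the radius:
\begin{itemize}
\item for $\rho<\delta$, it is bounded by $\sup_x M_{p_0,p}(f;x,\rho)$, which is small by $V_0$;
\item for $\rho>\Lambda$, it is small by $V_\infty$;
\item for $\delta\le\rho\le\Lambda$, cover $B(x,\rho)$ by $\lesssim(\Lambda)^n$ unit balls. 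The quantity is then $\lesssim_{\delta,\Lambda}A_{R,p}(f)$, which tends to $0$ by $V^{(*)}$.
\end{itemize}

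The tightness condition needs the same argument uniformly over $\mathcal F$, and this is the main obstacle. The $V_0$, $V_\infty$ and $V^{(*)}$ limits hold only for each function separately, not uniformly. I would first try to extract uniformity from the oscillation hypothesis. For small radii, $\|f\|$ restricted to $B(x,\rho)$ is controlled by $\|f-A_rf\|$ plus $\|A_rf\chi_{B(x,\rho)}\|$. The latter is $\lesssim|B(x,\rho)|^{1/p_0}r^{-n/p_0}\|f\|$, by the $L^\infty$ bound $|A_rf|\lesssim r^{-n/p_0}\|f\|$. This makes the small-scale part uniform.

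The large-scale and mid-range parts do not obviously become uniform, and uniform vanishing at infinity seems genuinely necessary for compactness. If it cannot be derived, I would read the statement as implicitly including the tightness of Theorem~\ref{thm:main}, namely $\lim_{R\to\infty}\sup_{f\in\mathcal F}\|f\chi_{B(0,R)^{\rm c}}\|_{\mathcal M^{p_0}_p}=0$, in the same way as the Corollary after Theorem~\ref{thm:characterization}. With that in hand, Theorem~\ref{thm:main} applies directly and gives the relative compactness of $\mathcal F$.
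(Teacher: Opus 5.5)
\textbf{The statement is false as written, and your proposal cannot prove it.} Your route is the paper's route: the paper gives no argument beyond calling Theorem~\ref{thm:compact-Morrey} a corollary of Theorem~\ref{thm:main}. Your other verifications are sound:
the ball Banach function space axioms;
$\|A_rf\|_{\mathcal M^{p_0}_p}\le\|f\|_{\mathcal M^{p_0}_p}$, via Minkowski and translation invariance;
and $\mathcal F\subset\widetilde{\mathcal M}{}^{p_0}_p(\mathbb R^n)$, via your three-range splitting with $V_0$, $V_\infty$, $V^{(*)}$.

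The step you leave open is the decisive one. The uniform tightness required by Theorem~\ref{thm:main} does not follow from the hypotheses, and for $1\le p<p_0$ the theorem fails. Take a nonzero $\phi\in C^\infty_{\rm c}(\mathbb R^n)$ supported in $B(0,1)$, and set $f_k=\phi(\cdot-3ke_1)$ for $k\in\mathbb N$, with $e_1=(1,0,\dots,0)$.
\begin{enumerate}
\item Each $f_k$ lies in $V^{(*)}_{0,\infty}\mathcal M^{p_0}_p(\mathbb R^n)$. Indeed, $M_{p_0,p}(f_k;x,r)\le |B(x,r)|^{1/p_0}\|\phi\|_{L^\infty}$ gives $V_0$. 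Next, $M_{p_0,p}(f_k;x,r)\le|B(x,r)|^{1/p_0-1/p}\|\phi\|_{L^p}$ gives $V_\infty$, because $1/p_0-1/p<0$. Finally, $A_{N,p}(f_k)=0$ for $N\ge 3k+1$.
\item The family is bounded: $\|f_k\|_{\mathcal M^{p_0}_p}=\|\phi\|_{\mathcal M^{p_0}_p}$ by translation invariance.
\item Local oscillation vanishes uniformly. Since $A_r$ commutes with translations,
$\|f_k-A_rf_k\|_{\mathcal M^{p_0}_p}=\|\phi-A_r\phi\|_{\mathcal M^{p_0}_p}\le\omega_\phi(r)\,|B(0,1+r)|^{1/p_0}\to0$ uniformly in $k$. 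Here $\omega_\phi$ is the modulus of continuity of $\phi$.
\item Yet the family is not relatively compact. The supports are pairwise disjoint, so $|f_k|\le|f_k-f_j|$ and hence $\|f_k-f_j\|_{\mathcal M^{p_0}_p}\ge\|\phi\|_{\mathcal M^{p_0}_p}>0$ for $j\ne k$. No subsequence is Cauchy.
\end{enumerate}
For $p=p_0$ the statement is vacuous, since $V_\infty\mathcal M^{p_0}_{p_0}(\mathbb R^n)=\{0\}$.

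The paper's one-line deduction has the same gap. Its appeal to Theorem~\ref{thm:main} silently uses uniform tightness, which membership of each function separately in $V^{(*)}_{0,\infty}\mathcal M^{p_0}_p(\mathbb R^n)$ cannot supply. Rather than hedging with ``if it cannot be derived'', you should exhibit the counterexample. Then state and prove the corrected theorem with the hypothesis $\lim_{R\to\infty}\sup_{f\in\mathcal F}\|f\chi_{\mathbb R^n\setminus B(0,R)}\|_{\mathcal M^{p_0}_p}=0$ added. With that hypothesis your argument is complete.

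In the corrected version, your small-scale estimate via $|A_rf(x)|\le|B(x,r)|^{-1/p_0}\|f\|_{\mathcal M^{p_0}_p}$ is correct but no longer needed. The assumption $\mathcal F\subset V^{(*)}_{0,\infty}\mathcal M^{p_0}_p(\mathbb R^n)$ is also no longer needed to invoke Theorem~\ref{thm:main}. Uniform tightness and vanishing oscillation already put each $f\in\mathcal F$ in $\widetilde{\mathcal M}{}^{p_0}_p(\mathbb R^n)$: the function $A_r(f\chi_{B(0,R)})\in C_{\rm c}(\mathbb R^n)$ satisfies $\|f-A_r(f\chi_{B(0,R)})\|_{\mathcal M^{p_0}_p}\le\|f-A_rf\|_{\mathcal M^{p_0}_p}+\|f\chi_{\mathbb R^n\setminus B(0,R)}\|_{\mathcal M^{p_0}_p}$.
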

See also \cite{HaSa25} for another characterization
of compact sets in $\widetilde{{\mathcal M}}{}^{p_0}_p({\mathbb R}^n)$.

We conclude this section with an example showing that boundedness in the Morrey norm does not imply the required vanishing property.
\begin{example}
Let $1 \le p<p_0<\infty$.
Then
$f(x)=|x|^{-\frac{n}{p_0}} \in {\mathcal M}^{p_0}_p({\mathbb R}^n)$
\cite{SaFaHa}.
However,\[
\lim_{R\to\infty}
\|f\chi_{B(0,R)}\|_{\mathcal M^{p_0}_{p}}
=0.
\]
fails.
\end{example}
\subsection{$L^\infty({\mathbb R})$}

This example shows that we can not develop
our theory for $L^\infty({\mathbb R})$.

\begin{example}
Let $\mathcal B(\mathbb R)=L^\infty(\mathbb R)$, and define
\[
\mathcal B_{\rm loc}(\mathbb R)
=
\left\{
f\in L^\infty(\mathbb R):
\lim_{j\to\infty}
\|f\chi_{E_j}\|_{L^\infty(\mathbb R)}
=0
\text{ whenever }E_j\downarrow\varnothing
\right\}.
\]
Then
\[
C_{\mathrm c}(\mathbb R)\not\subset\mathcal B_{\rm loc}(\mathbb R).
\]
Consider the function
\[
f(x)
=
\max\{1-|x|,0\},
\qquad x\in\mathbb R.
\]
Clearly,
$f\in C_{\mathrm c}(\mathbb R)$.

For each $n\in\mathbb N$, let
\[
E_n
=
\left[-\frac1n,\frac1n\right].
\]
Then
\[
E_1\supset E_2\supset\cdots,
\qquad
\bigcap_{n=1}^\infty E_n=\{0\}.
\]

Since a singleton has Lebesgue measure zero, we have
$E_n\downarrow\varnothing$ modulo null sets.
Moreover,
\[
\|f\chi_{E_n}\|_{L^\infty(\mathbb R)}
=
\sup_{x\in E_n}f(x)
=
f(0)
=
1
\]
for every $n\in\mathbb N$.
Hence
\[
\lim_{n\to\infty}
\|f\chi_{E_n}\|_{L^\infty(\mathbb R)}
=
1\neq0.
\]
Therefore,
$f\notin\mathcal B_{\rm loc}(\mathbb R)$, which proves that
\[
C_{\mathrm c}(\mathbb R)\not\subset\mathcal B_{\rm loc}(\mathbb R).
\]
\end{example}

\section*{Data availability}

Not applicable.

\section*{Author contributions}

The four authors contributed equally to the
correctness of this paper.

\end{document}